\newcommand{\myalert}[1]{{\vspace{2mm}\noindent\textbf{#1}}}
\newcommand{\reals}{\mathbb{R}}
\newcommand{\gradF}[1]{\nabla F(#1)}
\newcommand{\gradFi}[1]{\nabla_i F(#1)}
\newcommand{\filter}[1]{\mathcal{F}_{#1}}
\newcommand{\etahat}[2]{\hat{\eta}_{#1,#2}}
\newcommand{\etahatti}{\etahat{t}{i}}
\newcommand{\expect}[1]{\mathbb{E}\left[#1\right]}
\newcommand{\norm}[1]{\left\lVert#1\right\rVert}
\newcommand{\sumd}{\sum_{i=1}^{d}}
\newcommand{\sumT}{\sum_{t=1}^{T}}
\newcommand{\sgrad}[1]{\boldsymbol{g}_{#1}}
\newcommand{\w}{\vw}
\newcommand{\g}{\vg}
\newcommand\given{{\:\vert\:}}
\newcommand\Vector[1]{\boldsymbol{#1}}
\newcommand\vg{{\Vector{g}}}
\newcommand\vv{{\Vector{v}}}
\newcommand\vw{{\Vector{w}}}
\newcommand\vx{{\Vector{x}}}
\newcommand\vy{{\Vector{y}}}
\newcommand\vsigma{{\bm{\sigma}}}
\newcommand{\vL}{\Vector{L}}
\newcommand{\bigO}{\mathcal{O}}
\newcommand{\E}{\mathbb{E}}
\newtheorem{assumption}{Assumption}
\newtheorem{theorem}{Theorem}
\newtheorem{lemma}[theorem]{Lemma}
\newtheorem{corollary}[theorem]{Corollary}
\newtheorem{remark}{Remark}
\numberwithin{assumption}{section}
\numberwithin{definition}{section}
\numberwithin{theorem}{section}
\numberwithin{remark}{section}
\let\origtheassumption\theassumption
\newcounter{subassumption}
\title{Provable Complexity Improvement of AdaGrad over SGD: Upper and Lower Bounds in Stochastic Non-Convex Optimization
}
\author{Ruichen Jiang\thanks{Co-first authors, listed in alphabetical order} \thanks{Department of Electrical and Computer Engineering, The University of Texas at Austin, Austin, TX, USA\qquad\qquad\{rjiang@utexas.edu, mokhtari@austin.utexas.edu\}} \and Devyani Maladkar$^*$\thanks{Department of Computer Science, The University of Texas at Austin, Austin, TX, USA \qquad\qquad\qquad\{devyani.maladkar@utexas.edu\}}         \and
        Aryan Mokhtari$^\dagger$
}
\date{}
\newcommand{\green}{}
\begin{document}

\maketitle

\begin{abstract}
Adaptive gradient methods, such as AdaGrad, are among the most successful optimization algorithms for neural network training. While these methods are known to achieve better dimensional dependence than stochastic gradient descent (SGD)  for stochastic convex optimization under favorable geometry, the theoretical justification for their success in stochastic non-convex optimization remains elusive. 
{In fact, under standard assumptions of Lipschitz gradients and bounded noise variance, it is known that SGD is worst-case optimal (up to absolute constants) in terms of finding a near-stationary point with respect to the $\ell_2$-norm, making further improvements impossible.}
Motivated by this limitation, we introduce refined assumptions on the smoothness structure of the objective and the gradient noise variance, which better suit the coordinate-wise nature of adaptive gradient methods. 
Moreover, we adopt the $\ell_1$-norm of the gradient as the stationarity measure, as opposed to the standard $\ell_2$-norm, to align with the coordinate-wise analysis and obtain tighter convergence guarantees for AdaGrad. Under these new assumptions and the $\ell_1$-norm stationarity measure, we establish an \emph{upper bound} on the convergence rate of AdaGrad and a corresponding \emph{lower bound} for SGD.  In particular, we identify non-convex settings in which the iteration complexity of AdaGrad is favorable over SGD and show that, for certain configurations of problem parameters, it outperforms SGD by a factor of $d$, where $d$ is the problem dimension.
To the best of our knowledge, this is the first result to demonstrate a provable gain of adaptive gradient methods over SGD in a non-convex setting. {We also present supporting lower bounds, including one specific to AdaGrad and one applicable to general deterministic first-order methods, showing that our upper bound for AdaGrad is tight and unimprovable up to a logarithmic factor under certain conditions.}
\end{abstract}

\newpage
\section{Introduction}

Adaptive gradient methods, including variants like AdaGrad \citep{mcmahan2010adaptive,duchi11a} and Adam~\citep{Kingma2015}, have become essential for training large-scale neural networks and language models. Their popularity over classic stochastic gradient descent (SGD)~\citep{robbins1951stochastic} stems from two key features: (i) adaptive step sizes based on past gradients, eliminating the need for problem-specific parameters like the gradient's Lipschitz constant or stochastic gradient variance, and (ii) the use of coordinate-wise step sizes, allowing better exploitation of the objective's geometry compared to SGD's uniform step size.

 Their empirical success has motivated exploring theoretical guarantees that show a provable gain for this class of methods over the traditional SGD method. 
To pursue this goal, adaptive gradient methods were initially examined in the context of online convex optimization. In particular, it was shown by \cite{duchi11a} that depending on the geometry of the feasible set and the sparsity of the gradients, AdaGrad's regret bound could be either better or worse than that of SGD by a factor of $\sqrt{d}$, where $d$ represents the problem's dimension. For further details, we refer readers to \citep{hazan2016introduction,orabona2019modern}. Moreover, using the classical online-to-batch conversion~\citep{cesa2004generalization,shalev2012online}, these regret bounds directly translate into convergence rate guarantees in stochastic convex optimization.

In the \emph{non-convex setting}, although significant work has been done to characterize the convergence of adaptive methods under various assumptions (more details in the related work section), no provable gain has been established for adaptive methods over SGD, and demonstrating such a gain for AdaGrad in the non-convex setting remains an open problem, see \citep{chen2024open}. 

Note that when the objective function is smooth and the stochastic gradients are unbiased with bounded variance, SGD can, after $T$ iterations, find a point where the expected gradient $\ell_2$-norm is bounded by $\mathcal{O}(\frac{1}{T^{\nicefrac{1}{4}}})$ \citep{ghadimi2013stochastic,bottou2018optimization}. This convergence rate is known to be optimal for any method relying on first-order oracles under the discussed assumptions \citep{arjevani2023lower}. Consequently, to demonstrate a provable gain for adaptive methods over SGD in the non-convex setting, we must move beyond the classic setup. In particular, as we will discuss in detail, we argue that modifying both the \textit{assumptions} and the \textit{measure of stationarity} is necessary to better account for the coordinate-wise nature of adaptive methods.

\looseness = -1
\noindent\textbf{Contributions.} Motivated by the coordinate-wise structure of AdaGrad, we present refined assumptions on the smoothness and the noise variance by associating each coordinate with a Lipschitz constant $L_i$ and a gradient noise variance $\sigma_i^2$ for $i=1,2,\dots,d$ (see Assumptions~\ref{assumption:bounded-variance} and~\ref{assumption:Lsmooth}). However, even under these refined assumptions, we show that SGD is still worst-case optimal in the noiseless setting when the $\ell_2$-norm is the measure of stationarity~(Theorem~\ref{thm:lower_bound_l2}). Thus, we change the measure of stationarity to the $\ell_1$-norm and demonstrate that, with these new assumptions and the revised stationarity measure, it is possible to prove that AdaGrad achieves an upper bound complexity that outperforms the lower bound complexity for SGD.
 Our main contributions are summarized below:

\begin{itemize}
\vspace{-1mm}

    \item \textbf{Upper bound for AdaGrad:} Let $\vL = [L_1,\dots,L_d] \in \reals^d$ and $\vsigma = [\sigma_1,\dots,\sigma_d] \in \reals^d$ denote the Lipschitz constant vector and the noise variance vector, respectively. We establish that AdaGrad
  achieves a rate of ${\mathcal{O}}\Bigl(\!\!\sqrt{\frac{\|\vL\|_1 \log h(T)}{T}} + (\frac{\|\vsigma\|_1^2 \|\vL\|_1 \log h(T)}{T})^{\nicefrac{1}{4}}\!+\!\frac{\|\vsigma\|_1 \sqrt{\log h(T)}}{T^{\nicefrac{1}{4}}}\Bigr)$ in terms of the $\ell_1$-norm, where $h(T)$ is a polynomial function of $T$ and $d$ (Theorem~\ref{thm:adagrad-coord-main-result}). Notably, this rate depends on  $d$ only implicitly through $\vL$ and $\vsigma$.

   \item \textbf{Lower bound for SGD:} Under the same assumptions and using the $\ell_1$-norm as the stationarity measure, we show that the convergence rate of SGD with a constant step size is lower bounded by $\Omega\Bigl(\sqrt{\frac{d\|\vL\|_\infty}{T}} + \frac{d^{1/4} \left(\sum_{i=1}^d \sigma_i \sqrt{L_i}\right)^{1/2}}{T^{1/4}}\Bigr)$ when the number of iterations $T$ is sufficiently large (Theorem~\ref{thm:lower_bound_SGD}). 

   \item \textbf{Provable gain for AdaGrad over SGD:} By comparing AdaGrad's upper bound with SGD's lower bound, we show that when the parameters $\vL$ and $\vsigma$ are both sparse and aligned in a certain way, AdaGrad's complexity can be $d$ times better than the one for SGD.

  \item \textbf{Lower bounds for AdaGrad}: We establish a complexity lower bound for AdaGrad, matching the first term in our upper bound up to absolute constants (including the $\log T$ factor), as well as the second term under certain conditions on $\vL$ and $\vsigma$ (Theorem~\ref{thm:lower_bound_l2}). We also provide a lower bound of $\Omega\Bigl(\sqrt{\frac{\|\vL\|_1}{T}}\Bigr)$ for all deterministic first-order methods in the noiseless case, showing the first term is unimprovable up to log factors~(Theorem~\ref{thm:lower_bound_l1}).

\end{itemize}

\subsection{Related Work}

\myalert{AdaGrad-Norm.} Several prior works have established that AdaGrad-Norm achieves a convergence rate similar to that of SGD, but under stronger assumptions, such as bounded gradients \citep{ward2020adagrad, kavis2021high, gadat2022asymptotic}, the step-size being (conditionally) independent of the stochastic gradient \citep{li2019convergence, li2020high}, or sub-Gaussian noise \citep{li2020high,kavis2021high}.  
\cite{faw2022power} addressed this issue and showed that under standard assumptions—Lipschitz gradients and bounded variance—AdaGrad-Norm achieves the same complexity as SGD in terms of gradient's $\ell_2$-norm (up to a logarithmic factor). They further explored the setting where the stochastic gradient has affine variance. {In addition, several works~\citep{attia2023sgd,liu2023high} provided high-probability convergence guarantees for AdaGrad-Norm under sub-Gaussian noise assumptions.} The extension to the generalized smoothness setting~\citep{Zhang2020Why}
was developed in~\cite{faw2023beyond,wang2023convergence}. However, as mentioned earlier, these results do not demonstrate any improvement over SGD in terms of convergence rate. 

\looseness = -1
\myalert{AdaGrad and its variants.} {Most works on AdaGrad and its variants, such as RMSProp~\citep{tieleman2017divide}, Adam~\citep{Kingma2015} and AMSGrad~\citep{reddi2018convergence}, employed the gradient $\ell_2$-norm as the stationarity measure.} 
Under the assumption of bounded gradients, \cite{chen2019convergence,alacaoglu2020new,defossez2022simple} established a rate of $\bigO(\frac{1}{T^{\nicefrac{1}{4}}})$, but with an explicit dimension dependence of at least $\Omega(d^{\nicefrac{1}{4}})$.
{Thus, these convergence results could be worse than the dimensional-free rate of SGD.} 
Recently, several papers have studied the convergence of adaptive methods with respect to the gradient's $\ell_1$-norm, closely related to our work. 
Under the assumption of coordinate-wise subgaussian noise, \cite{liu2023high} provided a high-probability rate for AdaGrad of $\tilde{\mathcal{O}}\left( \frac{d}{\sqrt{T}} + \frac{d}{T^{1/4}} \right)$, which is worse than our worst-case rate by a factor of $\sqrt{d}$.
\cite{li2024frac} analyzed RMSProp under the standard smoothness assumption and a coordinate-wise bounded noise variance assumption and showed a convergence rate of $\tilde{\mathcal{O}}( \frac{\sqrt{d}}{\sqrt{T}}+\frac{\sqrt{d}}{T^{1/4}})$, which matches our worst-case bound. However, their convergence result only showed the possibility of matching the convergence rate of SGD instead of surpassing it, and thus it did not fully explain the advantage of adaptive gradient methods. 
Along a different line of research, \cite{crawshaw2022robustness} proposed a generalized SignSGD algorithm and analyzed its rate in terms of the gradient's $\ell_1$-norm,  under their proposed coordinate-wise generalized smoothness and subgaussian noise assumptions. 
However, their results are not directly comparable to ours due to the different assumptions and algorithms.
\myalert{Lower bounds.} Several works have studied the complexity of finding an $\epsilon$-stationary point of a smooth non-convex optimization with exact or noisy gradient oracles. However, to the best of our knowledge, they all use the $\ell_2$-norm of the gradient as the stationarity measure. In the noiseless setting, \cite{carmon2020lower} showed that all first-order methods require at least $\Omega(\frac{1}{\epsilon^2})$ gradient queries for finding a point $x$ with $\Vert\nabla f(x)\Vert_2 \leq \epsilon$.  
Building on similar techniques, \cite{arjevani2023lower} extended it to non-convex stochastic optimization and showed a lower bound of $\Omega(\frac{1}{\epsilon^4})$ for finding a point $x$ with $\mathbb{E}[\Vert \nabla f(x) \Vert_2] \leq \epsilon$. In addition to the use of $\ell_2$-norm, these works focus on establishing dimensional-free lower bounds and the constructed worst-case instance has a dimension that grows with $1/\epsilon$. As a result, their techniques are unfit for studying lower bounds in a given dimension,
which is our focus here. 
Along a different line of work, people have studied the complexity of finding $\epsilon$-stationary points of a function in a small dimension~\citep{vavasis1993black,cartis2010complexity,chewi2023complexity}. In particular, \cite{chewi2023complexity} showed that any deterministic first-order method would require $\Omega(\frac{1}{\epsilon^2})$ to find the $\epsilon$-stationary point of a one-dimensional smooth non-convex function.  
To the best of our knowledge, our result is the first to establish a lower bound in terms of the $\ell_1$-norm and highlight the dimensional dependence in the convergence rate. 

\myalert{Concurrent work.}
The concurrent work by \cite{liu2024large}, which appeared online two weeks after our initial paper was released, also examined AdaGrad's convergence under anisotropic smoothness and noise assumptions, similar to our refined Assumptions~\ref{assumption:bounded-variance} and~\ref{assumption:Lsmooth}. They proved an upper bound on AdaGrad's convergence rate in terms of the gradient's $\ell_1$-norm, comparable to our result in Theorem~\ref{thm:adagrad-coord-main-result}, and compared it with the classical upper bound for SGD in terms of the $\ell_2$-norm. In contrast, our approach focuses on establishing a lower bound for SGD, allowing us to directly compare AdaGrad's upper bound with SGD's lower bound to demonstrate a clear advantage for AdaGrad. Moreover, we further validate the tightness of our AdaGrad upper bound through two lower bounds, one specific to AdaGrad and another for deterministic first-order methods.

\section{Preliminaries}
\label{preliminaries}

\noindent\textbf{Notation.} We use boldface letters for vectors and normal font letters for scalars. The Euclidean or $\ell_2$-norm of a vector $\w$ is denoted by $\|\w\|_2$ and its $\ell_1$ norm is indicated by $\|\w\|_1$. For a vector $\w\in \mathbb{R}^d$, we denote its $i$-th coordinate by $w_i$. We use $[n]$ to denote the set $\{1,2,\dots,n\}$. Further,  $\mathcal{F}_t$ denotes the $\sigma$-algebra generated after time index $t$. In our case,   $\mathcal{F}_t$ contains all iterates $\w_0, \dots, \w_{t+1}$ and all stochastic gradients $\g_0, \dots, \sgrad{t}$. Finally, the notation $\tilde{\mathcal{O}}$ suppresses logarithmic dependencies.

In this paper, our objective is to identify an approximate stationary point of a smooth, non-convex function $ F: \mathbb{R}^d \rightarrow \mathbb{R}$ over the unbounded domain $ \mathbb{R}^d$.
The most commonly analyzed AdaGrad-type method in the literature is AdaGrad-Norm, which was first considered in \cite{mcmahan2010adaptive}. Specifically, AdaGrad-Norm updates the iterates $\w_{t}$ according to the following update rule: 
\begin{equation}\label{eq:adagrad-norm} \tag{AdaGrad-Norm}
   \textstyle \w_{t+1}=\w_{t} - \frac{\eta}{b_t+\delta} \ \g_{t}, \quad \text{where}\quad b_{t} = \sqrt{\sum_{s=1}^{t} \|\g_{s}\|^2},
\end{equation}
where $\g_{t}$ is the stochastic gradient of $F$ at $\w_{t}$, the scalar $\eta$ is a scaling parameter, $\delta>0$ is a small constant to ensure numerical stability. However, as mentioned in the introduction, most prior works demonstrated convergence similar to the guarantees obtained by SGD. 
In this paper, we focus on the coordinate-wise variant of AdaGrad, whose updates are given by 
\begin{equation}
\textstyle
  w_{t+1,i}=w_{t,i} - \eta\frac{g_{t,i}}{b_{t,i} + \delta}, \quad \text{where}\quad b_{t,i} = \sqrt{\sum_{s=1}^t g_{s,i}^2}\;\; \forall i\in [d], \label{eq:adagrad-coord-update}\tag{AdaGrad}
\end{equation}
where constant $\delta$ is introduced to ensure numerical stability. Some literature refers to this algorithm
as ``diagonal AdaGrad'' or ``coordinate-wise AdaGrad'', while reserving the name AdaGrad for the variant involving full matrix inversion. In this work, we refer to the diagonal version as AdaGrad, as it is the most widely used in practice.

\subsection{Assumptions and Measure of Stationarity}
In this section, we outline the assumptions required to characterize the complexity of \ref{eq:adagrad-coord-update}. 
{To provide motivation, we first revisit the standard assumptions on the objective function $F$ and its stochastic gradient, which are commonly used in the analysis of stochastic first-order methods~\citep{ghadimi2013stochastic,bottou2018optimization}. 

\begin{assumption}\label{assumption:lower_bounded}
    The function $F(\cdot)$ is bounded from below, i.e., $\inf_{\vw \in \reals^d} F(\vw) = F^*>-\infty$. 
\end{assumption}
\begin{assumption}\label{assumption:unbiased}
    The stochastic gradient $\sgrad{t}$  is unbiased,  
    i.e.,  $\mathbb{E}[{\sgrad{t}}\given \filter{t-1}]=\gradF{\w_t}$.
\end{assumption}

\edef\oldassumption{\the\numexpr\value{assumption}+1}

\setcounter{subassumption}{1}
\renewcommand{\theassumption}{\oldassumption\alph{subassumption}}

\begin{assumption}\label{assumption:variance_sd}
    The stochastic gradient $\sgrad{t}$ has a bounded variance, i.e., $\E[\|\vg_t -\nabla F(\vw_t) \|^2] \leq \sigma^2$ for some non-negative constant $\sigma$. 
\end{assumption}
\let\theassumption\origtheassumption

\edef\oldassumption{\the\numexpr\value{assumption}+1}

\setcounter{subassumption}{1}
\renewcommand{\theassumption}{\oldassumption\alph{subassumption}}
\begin{assumption}\label{assumption:smooth_sd}
    The function $F(\cdot)$ is smooth, i.e., for any vectors $\vx,\vy \in \reals^d$, we have $|F(\vx) - F(\vy) - \langle \nabla F(\vx), \vx-\vy \rangle| \leq \frac{L}{2}\|\vx-\vy\|^2$, where $L \geq 0$ is the Lipschitz constant of the gradient of~$F$.  
\end{assumption}
Under Assumptions~\ref{assumption:lower_bounded}-\ref{assumption:smooth_sd}, it is known that SGD,  with an appropriately chosen step size, can find a point $\hat{\vw}$ such that $\expect{\|\nabla F(\hat{\vw})\|_2^2} \leq \epsilon^2$ after at most $\bigO\left( \frac{L(F(\vw_1)-F^*)\sigma^2}{\epsilon^4}+\frac{L(F(\vw_1)-F^*) }{\epsilon^2}\right)$ iterations~\citep{ghadimi2013stochastic,bottou2018optimization}. Moreover, this complexity matches the lower bound for any first-order method up to an absolute constant, as shown by~\cite{arjevani2023lower}. 

\looseness = -1
According to this classical convergence theory, SGD is the optimal first-order method in this setting in the worst-case sense, leaving no room for further improvement. 
However, coordinate-wise adaptive methods,  such as \ref{eq:adagrad-coord-update}, are often observed to converge significantly faster than SGD in practice. 
Intuitively, the main advantage of \ref{eq:adagrad-coord-update} over SGD is that each coordinate employs a different step size that adapts to the gradients of each respective coordinate. In contrast, SGD uses the same step size across all coordinates, and thus its step size is constrained by the most ``difficult'' coordinate, impeding progress in other coordinates that could allow a larger step size.  
Consequently, we expect \ref{eq:adagrad-coord-update} to outperform SGD when the coordinates exhibit imbalance. To better capture how coordinate-wise AdaGrad exploits structural features, we propose replacing Assumptions~\ref{assumption:variance_sd} and~\ref{assumption:smooth_sd} with their coordinate-wise refined counterparts, inspired by \cite{bernstein2018signsgd}.}

\edef\oldassumption{\the\numexpr\value{assumption}-1}

\setcounter{subassumption}{2}
\renewcommand{\theassumption}{\oldassumption\alph{subassumption}}

\begin{assumption}\label{assumption:bounded-variance}
    The stochastic gradient $\sgrad{t}$ with elements $[g_{t,1}, \dots , g_{t,d}]$ has a coordinate-wise bounded variance. That is, for all $ i \in [d]$, we have 
    $\mathbb{E}[|g_{t,i} -\nabla_i F(\w_t)|^2 \given\filter{t-1} ]\leq \sigma_{i}^2,$
    where $\sigma_i$ is a non-negative constant %
    and $\nabla_i F(\w_t)$ represents the $i$-th coordinate of the gradient $\nabla F(\w_t)$.
    Moreover, we define the vector $\vsigma$ as $\vsigma=[\sigma_1,\sigma_2,..,\sigma_d]\in \mathbb{R}^d$. 
\end{assumption}

\let\theassumption\origtheassumption
The above condition on the variance of the stochastic gradient is a more fine-grained assumption compared to the standard assumption.
Indeed, our considered assumption implies Assumption~\ref{assumption:variance_sd} when we consider $\sigma^2=\sum_{i=1}^d \sigma_i^2$. As discussed earlier, since we aim to study an algorithm with a coordinate-specific update, the above assumption better captures its convergence behavior. %

\edef\oldassumption{\the\numexpr\value{assumption}-1}
\setcounter{subassumption}{2}
\renewcommand{\theassumption}{\oldassumption\alph{subassumption}}
\begin{assumption}\label{assumption:Lsmooth}
The function $F(\cdot)$ is coordinate-wise smooth, i.e.,  $\forall \vx,\vy \in \mathbb{R}^d$,
$|F(\vy) -F(\vx) - \langle \nabla F(\vx), \vy -\vx \rangle| \leq \sum_{i=1}^d\frac{L_i}{2}|x_i-y_i|^2$, 
where the constant $L_i> 0$ is the Lipschitz constant associated with the $i$-th coordinate.
Moreover, we define the vector $\vL$ as $\vL=[L_1,L_2,..,L_d]\in \mathbb{R}^d$. 
\end{assumption}
\let\theassumption\origtheassumption
\addtocounter{assumption}{-1}

Assumption~\ref{assumption:Lsmooth} is similar to the fine-grained assumptions in the literature for coordinate-wise analysis of algorithms \cite{richtarik2014iteration, bernstein2018signsgd}. We recover the standard smoothness in Assumption~\ref{assumption:smooth_sd} by considering the Lipschitz constant as $L := \max_i L_i= \|\vL\|_\infty$.

Besides the assumptions, the choice of stationarity measure is crucial in characterizing an algorithm's complexity. In non-convex optimization, the standard choice is the Euclidean $\ell_2$-norm of the gradient. However, this choice may be inadequate to demonstrate the advantage of \ref{eq:adagrad-coord-update} over SGD. To illustrate this, consider the noiseless setting where $\sigma_i = 0$ for all $i \in [d]$ and thus SGD reduces to gradient descent. Under Assumption~\ref{assumption:Lsmooth}, the gradient of $F$ is $\|\vL\|_{\infty}$-Lipschitz, and standard analysis shows that gradient descent with step size $\eta = {1}/{\|\vL\|_{\infty}}$ can find a point $\hat{\vw}$ such that $\|\nabla F(\hat{\vw})\|_2 \leq \epsilon$ after at most $\frac{2\|\vL\|_{\infty} (F(\vw_1)-F^*)}{\epsilon^2}$ iterations. 
The following theorem shows that if the $\ell_2$-norm of the gradient is used as the stationarity measure, no deterministic first-order method can outperform gradient descent by more than a factor of two, even under the refined Assumption~\ref{assumption:Lsmooth}. The complete proof is given in Appendix~\ref{appen:lower_bd_l2}. 

\vspace{-1mm}

\begin{theorem}\label{thm:lower_bound_l2} Consider any deterministic algorithm $\mathcal{A}$ with only access to the first-order oracle with an initial point $\vx_1 \in \reals^d$. For any positive vector $\mathbf{L} = [L_1,\dots,L_d]$ and any $\Delta_f>0$, there exists a function $f:\mathbb{R}^d\to \mathbb{R}$ such that: (i) $f$ satisfies Assumption~\ref{assumption:Lsmooth} and $f(\vx_1) - \inf f \leq \Delta_f$; (ii) Algorithm $\mathcal{A}$ requires more than $\frac{\|\mathbf{L}\|_\infty \Delta_f}{ \epsilon^2}$ gradient queries to find a point $\hat{\mathbf{x}}$ with $\|\nabla f(\hat{\mathbf{x}})\|_{2} < \epsilon$.  
\end{theorem}

\vspace{-1mm}
\noindent \textbf{Proof sketch.}
Inspired by similar arguments in \cite{chewi2023complexity}, we employ the concept of a ``resisting oracle''~\citep{Nemirovsky1983Problem,Nesterov2018} in our proof. 
Specifically, consider any deterministic method $\mathcal{A}$ that has access only to a first-order oracle, and let $T$ be an integer satisfying $T \leq \frac{\|\mathbf{L}\|_\infty \Delta_f}{ \epsilon^2}$. We will adversarially construct a function $f$ that satisfies the stated requirements and ensures that $\nabla f (\vx_t) = [\epsilon,0,0,\dots,0] \in \reals^d$ for any $t \in [T]$, where $\{\vx_t\}_{t=1}^T$ are the queries made by $\mathcal{A}$. Crucially, the function $f$ is not fixed in advance but is built based on the points $\vx_1,\vx_2,\dots,\vx_T$ queried by $\mathcal{A}$. This is possible due to the deterministic nature of $\mathcal{A}$, which allows us to ``simulate'' the algorithm using the known responses from the first-order oracle. Hence, we only need to show that there exists a function $f$ that satisfies the stated properties and is consistent with the output provided by the resisting oracle. 

Without loss of generality, assume $L_1 = \|\vL\|_{\infty}$. We construct the adversarial function in the form of $f(\vx) = \Delta_f p(\sqrt{\frac{L_1}{\Delta_f}}x^{(1)})$, where $x^{(1)}$ is the first coordinate of~$\vx$ and $p: \reals \rightarrow \reals$ is a function of one dimension to be determined.  
Let $\{x_{t}^{(1)}\}_{t=1}^T$ be the first coordinate of the queries $\{\vx_t\}_{t=1}^T$. Since $T \leq \frac{ \|\vL\|_{\infty} \Delta_f}{\epsilon^2}$, by invoking Lemma~\ref{lem:1d_interpolation} in Appendix~\ref{appen:lower_bd_l2}, 
we show the existence of a function $p$ satisfying the following conditions: (i) its gradient $p'$ is $1$-Lipschitz; (ii) $p(\sqrt{\frac{L_1}{\Delta_f}}x_{1}^{(1)}) - \inf p \leq 1$; (iii) $p'(\sqrt{\frac{L_1}{\Delta_f}}x_{t}^{(1)}) = \frac{\epsilon}{\sqrt{L_1 \Delta_f}}$ for any $t \in [T]$. 
It is easy to verify that $f$ meets all the required assumptions, and  $\forall t \in [T]$, $\|\nabla f ({\vx_t})\|_{2} = |\sqrt{L_1 \Delta_f} p'(\sqrt{\frac{L_1}{\Delta_f}}{x}_{t}^{(1)})| = \epsilon$. The proof is complete.
\qed
\looseness = -1
The lower bound in Theorem~\ref{thm:lower_bound_l2} matches the upper bound of SGD (up to a constant factor of 2), which certifies the optimality of SGD with respect to the gradient $\ell_2$-norm. To provide some intuition for this result, note that in the proof of Theorem~\ref{thm:lower_bound_l2}, the worst-case function for any deterministic first-order method can be realized by a function $f$ that is effectively one-dimensional. As such, the complexity bound does not reflect the imbalance between different coordinates. This observation motivates the use of an alternative stationarity measure. As we will demonstrate in the next section, the convergence analysis suggests that the gradient $\ell_1$-norm is a more suitable choice for \ref{eq:adagrad-coord-update}. 

\section{\texorpdfstring{$\ell_1$-norm Convergence of AdaGrad: Upper and Lower Bounds}{l1-norm Convergence of AdaGrad: Upper and Lower Bounds}}\label{subsec:adagrad_upper}
In this section, we present our main convergence results for \ref{eq:adagrad-coord-update}.
In Section~\ref{subsec:upper_bnd}, we derive an upper bound on the number of iterations required to find a near-stationary point in terms of the $\ell_1$-norm, instead of the conventional $\ell_2$-norm. 
As discussed earlier, this stationarity measure is more suitable given the coordinate-specific structure of \ref{eq:adagrad-coord-update} 
and better highlights the advantages compared to SGD and \ref{eq:adagrad-norm}, as we will demonstrate. Then in Section~\ref{subsec:adagrad_lower_bound}, we provide supporting lower bounds to demonstrate that our upper bounds are tight under specific settings.

\subsection{Upper Bound}\label{subsec:upper_bnd}
In this section, we first state our main convergence result for \ref{eq:adagrad-coord-update} in terms of the expected average $\ell_1$-norm of the gradient. Due to space limitations, we provide a proof sketch below and the complete proof can be found in Appendix~\ref{appen:adagrad_convergence}. 

\begin{theorem}\label{thm:adagrad-coord-main-result}
  Let $\{\w_t\}_{t=1}^T$ be the iterates generated by  \ref{eq:adagrad-coord-update} with $\delta < \frac{1}{d}$ and suppose that Assumptions~\ref{assumption:lower_bounded},~\ref{assumption:unbiased},~\ref{assumption:bounded-variance}, and~\ref{assumption:Lsmooth} hold. 
Then $\expect{\frac{1}{T}\sum_{t=1}^T \norm{\gradF{\w_t}}_1}$ is upper bounded by 
     \begin{equation}\label{eq:simplified_upper_bound}
    \bigO \biggl(\frac{\Delta_F}{\eta\sqrt{T}} 
  \!+\! \frac{\eta \|\bm{L}\|_1 \log h(T)}{\sqrt{T}} \!+\! \frac{\sqrt{\|\vsigma\|_1\Delta_F}}{\sqrt{\eta}T^{\frac{1}{4}}}  + \frac{\sqrt{\eta\|\vsigma\|_1 \|\bm{L}\|_1 \log h(T)}}{T^{\frac{1}{4}}} + \frac{{\|\vsigma\|_1}\sqrt{\log h(T)}}{T^{\frac{1}{4}}} \biggr),
     \end{equation}
     where $\Delta_F = F(\vw_1) - F^*$ and $h(T) = \bigO\left(\frac{T\|\vsigma\|_{\infty}^2 + T\|{\gradF{\w_1}}\|_{\infty}^2 + \eta^2\|\vL\|_\infty\|\vL\|_1T^3}{\delta^2}\right)$.
 \end{theorem}

\noindent \textbf{Proof sketch.}
Our proof consists of the following steps. 

\myalert{Step 1:} Define $\eta_{t,i} = \frac{\eta}{b_{t,i} + \delta}$ and rewrite \ref{eq:adagrad-coord-update} as $w_{t+1,i} = w_{t,i} - \eta_{t,i}g_{t,i}$. By applying Assumption~\ref{assumption:Lsmooth} 
$\w_{t}$ and $\w_{t+1}$, we obtain the descent inequality 
$F(\w_{t+1}) 
    \!\leq\! F(\w_{t}) - \sum_{i=1}^d \eta_{t,i} g_{t,i} \gradFi{\w_t}   + \sum_{i=1}^d \frac{L_i}{2}\eta_{t,i}^2g_{t,i}^2$. 
Note that $\eta_{t,i}$ and $g_{t,i}$ are correlated and thus $\expect{\eta_{t,i} g_{t,i} \given \filter{t-1}} \neq \eta_{t,i}\expect{g_{t,i} \given \filter{t-1}}$, which is one of the main challenges of analyzing adaptive gradient methods.  
To address this, following \citep{ward2020adagrad,faw2022power}, we introduce
a ``decorrelated step size'' as: %
\begin{equation}\label{eq:decorrelated}
\textstyle
     \etahat{t}{i}=
      \frac{\eta}{\sqrt{b_{t-1,i}^2 + \sigma_i^2 + \gradFi{\w_t}^2} + \delta}.
\end{equation}
Compared to the definition $\eta_{t,i} = \frac{\eta}{\sqrt{b_{t-1,i}^2 + g_{t,i}^2} + \delta}$, the stochastic gradient~$g^2_{t,i}$ is replaced with $ \gradFi{\w_t}^2 + \sigma_{i}^2$ in \eqref{eq:decorrelated} and as a result $\hat{\eta}_{t,i}$ and $g_{t,i}$ are independent conditioned on $\filter{t-1}$. 
Using the decorrelated step size, we obtain the following key inequality (see Corollary~\ref{coro:bound_on_gradient}):
  \begin{equation}\label{eq:bound_on_gradient}
   \textstyle \E \Bigl[{\sumT \sumd \frac{\etahat{t}{i}}{2}\gradFi{\w_t}^2}\Bigr] 
\leq F(\w_1)-F^* +  \Bigl({2\eta } \|\vsigma\|_1 + \frac{\eta^2 \|\vL\|_1}{2}\Bigr) \log{h(T)},
  \end{equation}
  where $h(T) = 1+\frac{T\|\vsigma\|_{\infty}^2}{\delta^2}+ \frac{T(\norm{\gradF{\w_1}}_{\infty}+\eta \sqrt{\|\vL\|_{\infty}\|\vL\|_{1}}T)^2}{\delta^2}$. %
\looseness = -1
\myalert{Step 2:} 
In light of~\eqref{eq:bound_on_gradient}, it remains to establish lower bounds on the step sizes $\etahatti$. 
Since each coordinate is updated independently,
we study each coordinate  and construct a uniform lower bound on $\etahatti$ for $t\in [T]$. Specifically, for each $i\in [d]$, we define a new auxiliary step size %
as \green{$\tilde{\eta}_{T,i} = \frac{\eta}{\sqrt{\sum_{i=1}^{T-1} g^2_{t,i} + \sumT \gradFi{\w_t}^2 + \sigma_i^2} + \delta}$.}
From~\eqref{eq:decorrelated} and $b_{t-1,i} = \sum_{s=1}^{t-1} g_{s,i}^2$ in \ref{eq:adagrad-coord-update}, it can be shown that $\etahatti \geq \tilde{\eta}_{T,i}$ for all $t\in[T]$. 
Moreover, we separate the step sizes from the gradients as follows:
\begin{equation}\label{eq:cauchy-schwarz}
  \E \Bigl[
  {\sumT \frac{\etahat{t}{i}}{2}\gradFi{\w_t}^2}\Bigr] \geq \E \Bigl[{\frac{\tilde{\eta}_{T,i}}{2}\sumT {\gradFi{\w_t}^2} }\Bigr] \geq {\expect{\sqrt{\textstyle\sumT \gradFi{\w_t}^2}}^2} \times \frac{1}{\expect{\frac{2}{\tilde{\eta}_{T,i}}}},
\end{equation}
where we used that $\expect{\frac{X^2}{Y}} \geq \frac{(\expect{{X}})^2}{\expect{Y}}$ for any two positive random variables $X$ and~$Y$ \green{by Cauchy-Schwarz inequality}. 
Hence, 
we proceed to establish an upper bound on $\expect{\frac{1}{\tilde{\eta}_{T,i}}}$ (see Lemma~\ref{lemma:upper-bound-etahat}): 
\begin{equation}\label{eq:upper-bound-etahat}
  \textstyle  \expect{\frac{1}{\tilde{\eta}_{T,i}}}\leq \frac{\sigma_i\sqrt{ 2T}+\delta}{\eta}+\frac{\sqrt{3}\E \Bigl[{\sqrt{\sumT \gradFi{\w_t}^2}}\Bigr]}{\eta}.
\end{equation}
\myalert{Step 3:} Note that the upper bound in~\eqref{eq:upper-bound-etahat} depends on the sum $ \E\Bigl[{\sqrt{\sumT \gradFi{\w_t}^2}}\Bigr]$, which also appears on the right hand side of \eqref{eq:cauchy-schwarz}.  
By combining~\eqref{eq:bound_on_gradient}, \eqref{eq:cauchy-schwarz} and~\eqref{eq:upper-bound-etahat}, we arrive at (see Lemma~\ref{lem:bound_on_sum_sqrt}): 
\begin{equation}\label{eq:bound_on_sum_sqrt}
\textstyle    \expect{\sumd\sqrt{\sumT \gradFi{\w_t}^2}}
      \leq \frac{2\sqrt{3}}{\eta}Q + \sqrt{\frac{2d\delta Q}{\eta}} + 2 \sqrt{\frac{\|\vsigma\|_1Q}{\eta}}T^{\frac{1}{4}},
\end{equation}
where $Q$ denotes the right-hand side of \eqref{eq:bound_on_gradient}.
The last step is to relate the left-hand side of the inequality in~\eqref{eq:bound_on_sum_sqrt} to the $\ell_1$-norm of the gradients. Specifically, 
we can write: 
\begin{equation*}
  \frac{1}{T}\! \sumT \norm{\gradF{\w_t}}_1 \!=\! \frac{1}{T} \sumT \sumd |\gradFi{\w_t}| \!=\! \frac{1}{T}  \sumd \sumT |\gradFi{\w_t}| \!\leq\!\frac{1}{\sqrt{T}}  \sumd \sqrt{\sumT |\gradFi{\w_t}|^2},   
\end{equation*}
where we switched the order of the two summations in the second equality and used the Cauchy-Schwarz inequality in the last inequality. This leads to our main theorem. 
\qed

\vspace{-3mm}
\begin{remark}
    We observe that the $\ell_1$-norm of the gradient naturally emerges as the convergence measure, as it provides \emph{the tightest bound} derivable from the inequality in Lemma~\ref{lem:bound_on_sum_sqrt}. Indeed, the $\ell_1$-norm is always an upper bound on the $\ell_2$-norm, and thus the above bound also immediately implies an upper bound on  $\frac{1}{T} \sumT \norm{\gradF{\w_t}}_2$. However, this relaxation will undermine the advantage of \ref{eq:adagrad-coord-update} when compared to SGD or \ref{eq:adagrad-norm}. 
\end{remark}

A few remarks on Theorem~\ref{thm:adagrad-coord-main-result} are in order.
 First, a key feature of the upper bound in~\eqref{eq:simplified_upper_bound} is that, apart from the logarithmic term $\log h(T)$, it does not explicitly depend on the dimension $d$. Instead, the dependence is implicit via the variance vector~$\vsigma$ and the Lipschitz  vector~$\vL$ defined in Assumptions~\ref{assumption:bounded-variance} and~\ref{assumption:Lsmooth}. In contrast, as shown later in Section~\ref{subsec:SGD_lower_bd}, SGD unavoidably will incur an explicit dependence on the dimension $d$ in its convergence bound. %
 Moreover, if we select the scaling parameter $\eta$ in \ref{eq:adagrad-coord-update} to achieve the best convergence bound, then \eqref{eq:simplified_upper_bound} will become
\begin{equation}\label{eq:fine_tuned}
{\mathcal{O}}\biggl(\sqrt{\frac{\|\vL\|_{1}\Delta_F \log h(T)}{T}} + \left({\frac{{\|\vsigma\|^{2}_{1} {\|\vL\|_{1}\Delta_F} \log h(T)}}{{T}}} \right)^{\nicefrac{1}{4}} + \frac{{\|\vsigma\|_1}\sqrt{\log h(T)}}{T^{\nicefrac{1}{4}}}\biggr).
\end{equation} 
This bound is  adaptive to the noise level: when the noise level in the stochastic gradient is relatively small, i.e., $ \|\vsigma\|^2_1 \ll \frac{\|\vL\|_1 \Delta_F}{{T}}$, then \ref{eq:adagrad-coord-update}  will achieve a faster rate of $\bigO(\sqrt{\frac{\|\vL\|_{1}\Delta_F \log h(T)}{T}})$. As shown in the next section, this rate matches our lower bound in the noiseless case, up to a log factor. \green{We also present a detailed comparison with the existing results for \ref{eq:adagrad-coord-update}  in Appendix~\ref{appen:comparison_adagrad}.}

\subsection{Lower Bounds}\label{subsec:adagrad_lower_bound}

After establishing an upper bound for \ref{eq:adagrad-coord-update}, we move on to show a lower bound under the same conditions (the complete proof is given in Appendix~\ref{appen:lb_adagrad}). For simplicity, we set $\delta = 0$ in \ref{eq:adagrad-coord-update}, 
 but generalizing to $\delta > 0$ is straightforward.
 
\begin{theorem}\label{thm:lower_bnd_adagrad}
    Consider running \ref{eq:adagrad-coord-update} with $\delta = 0$ and the scaling parameter $\eta$. Let $\mathbf{L} = [L_1, L_2, \dots, L_d]$, $\vsigma = [\sigma_1, \sigma_2, \dots,\sigma_d]$ and $\Delta_f > 0$ be given parameters. Then there exists a function $f: \reals^d \rightarrow \reals$ such that: (i) $f$ satisfies Assumption~\ref{assumption:Lsmooth} and $f(\vx_1) - \inf f \leq \Delta_f$; (ii) The stochastic gradient $\vg_t$ satisfies Assumptions~\ref{assumption:unbiased} and~\ref{assumption:bounded-variance}; (iii) 
    We have 
    $\mathbb{E}\left[\min_{1 \leq t \leq T} \Vert \nabla f(\mathbf{x}_t)\Vert_1\right] = \Omega \Bigl( 
\max\Bigl\{\sqrt{\frac{\|\vL\|_{1} \Delta_f \log T}{T}},
\Bigl( \frac{(\sum_{i=1}^d \sigma^{2/3}_i {L_i}^{1/3})^{3}\Delta_f\log T}{T} \Bigr)^{\frac{1}{4}}
\Bigr\}\Bigr)
$.   
\end{theorem}

\noindent \textbf{Proof sketch.}
We construct the function $f$ in the form of $f(\vx) = \sum_{i=1}^d p_i(x^{(i)})$, where $x^{(i)}$ denotes the $i$-th coordinate of the vector $\vx \in \reals^d$ and $p_i : \reals \rightarrow \reals$ is a one-dimensional function to be specified. Since each coordinate is updated independently in~\ref{eq:adagrad-coord-update}, this is equivalent to running \ref{eq:adagrad-coord-update} on each of the one-dimensional functions $p_i$ in parallel. Thus, this requires us to understand the convergence lower bound for \ref{eq:adagrad-coord-update} in the one-dimensional setting.  

In one dimension, \ref{eq:adagrad-coord-update} follows the update rule $x_{t+1} = x_t -  \frac{\eta}{\sqrt{\sum_{s=1}^t |g_s|^2}} g_t$, where $g_t$ denotes the stochastic gradient at time step $t$. In Corollary~\ref{lem:1d_adagrad_lb}, we will show that there exists a one-dimensional function $p_{\Delta, L, \sigma, T}(\cdot)$ and a stochastic gradient oracle such that: (i) Its gradient is $L$-Lipschitz and its initial function value gap is bounded by $\Delta$; (ii) The stochastic gradient oracle in unbiased with bounded variance $\sigma^2$; (iii) The iterates of~\ref{eq:adagrad-coord-update} after $T$ iterations satisfy $\mathbb{E}\left[\min_{1 \leq t \leq T}  |p'({x}_t)| \right] = \Omega(\sqrt{\frac{L \Delta \log T}{T}}+ (\frac{\sigma^2 L \Delta \log T}{T})^{\nicefrac{1}{4}} )$.  
Similar to the proof of Theorem~\ref{thm:lower_bound_l2}, our construction is based on the ``resisting oracle'' argument, which we briefly sketch below.  
Without loss of generality, assume that \ref{eq:adagrad-coord-update} is initialized with $x_1 = 0$. For some $\epsilon = \Omega(\sqrt{\frac{L \Delta \log T}{T}}+ (\frac{\sigma^2 L \Delta \log T}{T})^{\nicefrac{1}{4}} )$, we aim to construct a function $p_{\Delta, L, \sigma, T}$ such that $p_{\Delta, L, \sigma, T}'(x_t) = -\epsilon$ for all $t \in [T]$ with the stochastic gradient oracle chosen as 
\begin{equation}\label{eq:stochastic_gradient}
\textstyle
    \Pr(g_t = 0 \given x_t) = \frac{\sigma^2}{\sigma^2 + \epsilon^2} \quad \text{and} \quad \Pr \Bigl(g_t = -\frac{\sigma^2 + \epsilon^2}{\epsilon} \given x_t\Bigr) = \frac{\epsilon^2}{\sigma^2 + \epsilon^2}. 
\end{equation}  
One can verify that $\E[g_t \given x_t] = -\epsilon = p'(x_t)$ and $\E[|g_t - p'(x_t)|^2 \given x_t] = \sigma^2$. Our key observation is that, under the stochastic gradient oracle in \eqref{eq:stochastic_gradient},  the dynamic of \ref{eq:adagrad-coord-update} can be modeled as a \emph{random walk in one direction} and its query points can be determined in advance. Specifically, let $M_t$ denote the number of times the stochastic gradient is non-zero by time $t$. 
Since the non-zero stochastic gradients all take the same value, it follows from the update rule of \ref{eq:adagrad-coord-update} that 
\begin{equation}\label{eq:dyanmic_M}
    \begin{cases}
    M_{t} = M_{t-1}+1,\;x_{t+1} = x_t + \frac{\eta}{\sqrt{M_t}} & \text{if }g_t \neq 0\;(\text{with probability }\frac{\epsilon^2}{\sigma^2 + \epsilon^2});\\
    M_{t} = M_{t-1},\quad\quad x_{t+1} = x_t & \text{otherwise }(\text{with probability }\frac{\sigma^2}{\sigma^2 + \epsilon^2}).
    \end{cases}
\end{equation}
In particular, the points visited by \ref{eq:adagrad-coord-update} belong to the set $\{\sum_{s=1}^t \frac{\eta}{\sqrt{s}}:\; t \geq 1\}$, which allows us to construct the function $p_{\Delta, L, \sigma, T}$.  
Having defined the function $p_{\Delta, L, \sigma,T}$, 
we then set $f$ to be $f(\vx) = \sum_{i=1}^d p_i(x^{(i)})$, where $p_i(\cdot) = p_{\Delta_i, L_i, \sigma_i,T}(\cdot)$ and $\sum_{i=1}^d \Delta_i = \Delta$. 
Thus, it follows that 
\begin{equation}\label{eq:adagrad_gradient_l1_lb}
    \mathbb{E}\left[\min_{1 \leq t \leq T+1}  \|\nabla f({\vx}_t)\|_1 \right] = \Omega \Bigl( \sum_{i=1}^d \sqrt{\frac{L_i \Delta_i \log T}{T}} + \sum_{i=1}^d \Bigl(\frac{\sigma_i^2 L_i \Delta_i \log T}{T}\Bigr)^{\frac{1}{4}}  \Bigr).
\end{equation}
Finally, choosing $\Delta_i$ (for $i \in [d]$) properly to maximize the right-hand side of \eqref{eq:adagrad_gradient_l1_lb}, we obtain the lower bound in Theorem~\ref{thm:lower_bnd_adagrad}. 
\qed

Now let us compare our lower bound in Theorem~\ref{thm:lower_bnd_adagrad} with the upper bound in~\eqref{eq:fine_tuned}, where we recall that 
$h(T)$ is a polynomial function of $T$ and problem parameters. 
We observe that the first noiseless term in our upper bound matches the corresponding term in our lower bound, up to an absolute constant. Notably, our lower bound shows that the additional logarithmic term in the upper bound is necessary, rather than being an artifact of the analysis. 
For the second noise-dependent term, 
the upper bound and the lower bound differ only in their dependence on $\vL$ and $\vsigma$. Moreover, applying H\"older's inequality yields $(\sum_{i=1}^d \sigma^{2/3}_i {L_i}^{1/3})^{3} \leq \|\vsigma\|_1^2 \|\vL\|_1$, and the equality holds when the noise variances and the Lipschitz parameters are aligned in a particular way. Hence, under certain conditions on $\vL$ and $\vsigma$, the second terms also match up to an absolute constant. %
Finally, our upper bound contains an additional third term $\frac{{\|\vsigma\|_1}\sqrt{\log h(T)}}{T^{\frac{1}{4}}}$, which is absent from our lower bound. It is an interesting open question whether this term can be improved.

The lower bound in Theorem~\ref{thm:lower_bnd_adagrad} is specific to  \ref{eq:adagrad-coord-update}. In what follows, we present another lower bound that applies to all deterministic algorithms with access only to the first-order oracle, but only in the noiseless setting (where $\sigma_i = 0$ for all $i \in [d]$). This result is in the same spirit as Theorem~\ref{thm:lower_bound_l2}, but here we use the $\ell_1$-norm of the gradient as the stationarity measure, as opposed to the $\ell_2$-norm. Since the proof technique is similar to the one in Theorem~\ref{thm:lower_bound_l2}, we defer the proof to Appendix~\ref{appen:lower_bound_l1}.

\begin{theorem}\label{thm:lower_bound_l1} Consider any deterministic algorithm $\mathcal{A}$ that only has access to the first-order oracle with an initial point $\vx_1 \in \reals^d$. For any  positive vector $\mathbf{L} = [L_1,L_2,\dots,L_d]$ and $\Delta_f>0$, there exists a function $f:\mathbb{R}^d\to \mathbb{R}$ such that: (i) $f$ satisfies Assumption~\ref{assumption:Lsmooth} and $f(\vx_1) - \inf f \leq \Delta_f$; (ii) Algorithm $\mathcal{A}$ requires more than $\frac{\|\mathbf{L}\|_1 \Delta_f}{ \epsilon^2}$ gradient queries to find a point $\hat{\mathbf{x}}$ with $\|\nabla f(\hat{\mathbf{x}})\|_{1} < \epsilon$.
\end{theorem}

Note that in the noiseless setting, our upper bound in \eqref{eq:fine_tuned} simplifies to $\bigO \Bigl(\sqrt{\frac{\|\vL\|_1 \Delta_F \log h(T)}{T}}\Bigr)$, which is equivalent to $\tilde{\mathcal{O}}(\frac{\|\vL\|_1 \Delta_F}{\epsilon^2})$ and matches the lower bound in Theorem~\ref{thm:lower_bound_l1}, up to logarithmic terms.

\section{\texorpdfstring{$\ell_1$-norm Convergence of SGD: A Lower Bound}{l1-norm Convergence of SGD: A Lower Bound}}\label{subsec:SGD_lower_bd}
Having established the convergence of \ref{eq:adagrad-coord-update} in terms of the gradient $\ell_1$-norm in the previous section, we now seek to compare it with the convergence rate of SGD. However, the existing convergence bounds for SGD use the $\ell_2$-norm of the gradient as the stationarity measure, making them not directly comparable to our result in Theorem~\ref{thm:adagrad-coord-main-result}. To facilitate a rigorous comparison, our goal in this section is to provide a lower complexity bound for SGD with respect to the $\ell_1$-norm, which is shown in the following theorem (the complete proof is given in Appendix~\ref{appen:lower_bound_SGD}). 

\begin{theorem}\label{thm:lower_bound_SGD}
    Consider running SGD with update rule $\mathbf{x}_{t+1} = \mathbf{x}_t - \eta \vg_t$ on a smooth function $f$ with a constant step size $\eta$. 
For any given positive vector $\mathbf{L} = [L_1, L_2, \dots, L_d]$, non-negative vector $\vsigma = [\sigma_1, \sigma_2, \dots,\sigma_d]$ and $\Delta_f > 0$, 
there exists a function $f: \mathbb{R}^d \rightarrow \mathbb{R}$ such that: (i) $f$ satisfies Assumption~\ref{assumption:Lsmooth} and $f(\mathbf{x}_1) - \inf f \leq \Delta_f$; (ii) The stochastic gradient $\vg_t$ satisfies Assumptions~\ref{assumption:unbiased} and~\ref{assumption:bounded-variance}; (iii) We have
$\mathbb{E}\left[\min_{1 \leq t \leq T} \Vert \nabla f(\mathbf{x}_t)\Vert_1\right] = 
\Omega \Bigl( 
\sqrt{\frac{d \|\vL\|_{\infty} \Delta_f}{T}}+\frac{d^{{1}/{4}}\Delta_f^{1/4} (\sum_{i=1}^d \sigma_i \sqrt{L_i})^{1/2}}{T^{1/4}} \Bigr)
$ when $T$ is sufficiently large. 
\end{theorem}
\noindent \textbf{Proof sketch.}
We follow a similar approach as in Theorem~\ref{thm:lower_bnd_adagrad}. 
The function $f$ is constructed in the form of $f(\vx) = \sum_{i=1}^d p_i(x^{(i)})$, where $x^{(i)}$ denotes the $i$-th coordinate of the vector $\vx \in \reals^d$ and $p_i : \reals \rightarrow \reals$ is a one-dimensional function to be determined. Similar to \ref{eq:adagrad-coord-update}, our key observation is that running SGD on $f$ is equivalent to running SGD with the same step size $\eta$ for each of the one-dimensional function $p_i$ in parallel, and thus it is sufficient to characterize the complexity lower bound in the one-dimensional setting. 

Extending the construction in \cite[Proposition 4]{abbaszadehpeivasti2022exact} to the stochastic setting, in Lemma~\ref{lem:SGD_1d}, we show that there exists a one-dimensional function $p_{\Delta, L, \sigma, \eta, T}(\cdot)$ and an associated stochastic gradient oracle such that: (i) Its gradient is $L$-Lipschitz and the initial function value gap is bounded by $\Delta$; (ii) The stochastic gradient oracle is unbiased with bounded variance $\sigma^2$; (iii) The iterates of SGD with step size $\eta$ satisfy $\mathbb{E}\left[\min_{1 \leq t \leq T}  |p'({x}_t)| \right] 
 \geq \sqrt{2L\Delta}$ if $\eta \geq \frac{2}{L}$, and $\mathbb{E}[\min_{1 \leq t \leq T}  |p'({x}_t)| ] \geq \max\Bigl\{ \frac{1}{2}\sqrt{\frac{\Delta}{2\eta T + \frac{1}{2L}}}, \min \Bigl\{\sigma \sqrt{\frac{L \eta}{2}},\sqrt{2L \Delta} \Bigr\}\Bigr\}$ otherwise. 
Given this result, we set $f(\vx) = \sum_{i=1}^d p_{\frac{\Delta}{d}, L_i, \sigma_i, T, \eta}(x^{(i)})$, where $x^{(i)}$ denotes the $i$-th coordinate of $\vx$. By considering different choices of the step size $\eta$ and establishing a lower bound in each case, we arrive at the final result. 
\qed

From Theorem~\ref{thm:lower_bound_SGD}, we observe that the convergence rate of SGD exhibits a similar dependence on the number of iterations $T$ as \ref{eq:adagrad-coord-update}. However, a key distinction lies in the explicit dependence on the dimension $d$. In the next section, we provide a detailed comparison between the lower bound of SGD with the upper bound of \ref{eq:adagrad-coord-update}.

\section{Comparison between AdaGrad and SGD}\label{sec:comparison}
In this section, we compare the rate obtained in Theorem~\ref{thm:adagrad-coord-main-result} for~\ref{eq:adagrad-coord-update} with the convergence lower bound of SGD in Theorem~\ref{thm:lower_bound_SGD}. 
Inspired by the analysis in \cite{bernstein2018signsgd}, we introduce two density functions for this comparison. We define the density functions $\phi: \mathbb{R}^d \rightarrow [0,1]$ as follows:
\begin{equation}
        \phi(\vv):=\frac{\norm{\vv}^2_1}{d\norm{\vv}^2_2} \in \left[\frac{1}{d},1\right] 
        \quad \text{and} \quad \tilde{\phi}(\vv):= \frac{\|\vv\|_1}{d \|\vv\|_{\infty}} \in \left[\frac{1}{d}, 1 \right].
\end{equation}
Specifically, a larger value of $\phi(\vv)$ or $\tilde{\phi}(\vv)$
indicates that the vector $\vv$ is denser. 
Using this notation, we can write $\|\vsigma_2\|^2_2 = \frac{\|\vsigma\|_1^2}{d \phi(\vsigma)}$ and $\|\vL\|_\infty = \frac{\|\vL\|_1}{{d\tilde{\phi}(\vL)}}$, and the lower bound in Theorem~\ref{thm:lower_bound_SGD} for SGD becomes
\begin{equation}\label{eq:SGD_density}
    \min_{t=1,\dots, T}\!\expect{\norm{\gradF{\w_t}}_1} = \!\Omega\!\left( \sqrt{\frac{\|\vL\|_{1}\Delta_F}{\tilde{\phi}(\vL) T}} + \left({\frac{R^2{\|\vsigma\|^{2}_{1} {\|\vL\|_{1}\Delta_F}}}{{{\phi}(\vsigma) T}}} \right)^{\frac{1}{4}}
    \right),
\end{equation}
where 
\begin{equation}
   R = {\frac{\sum_{i=1}^d \sigma_i \sqrt{L_i}}{\|\vsigma\|_2 \sqrt{\|\vL\|_{1}}}} \in [0,1]  
\end{equation}
is the cosine similarity between the two vectors $[\sigma_1,\dots,\sigma_d]\in \reals^d$ and $[\sqrt{L_1},\dots,\sqrt{L_d}]\in \reals^d$. 
To facilitate the comparison, we first translate the convergence rates of \ref{eq:adagrad-coord-update} in \eqref{eq:fine_tuned} and SGD in \eqref{eq:SGD_density} into equivalent iteration complexity bounds. Specifically, to find an $\epsilon$-stationary point in terms of the $\ell_1$-norm, we observe that the required number of iterations is 
\begin{align}
    \tilde{\bigO}\left(\frac{\|\vL\|_1 \Delta_F}{\epsilon^2} + \frac{\|\vsigma\|_1^2 \|\vL\|_1\Delta_F}{\epsilon^4} + \frac{\|\vsigma\|_1^4}{\epsilon^4}\right) \quad &\text{for \ref{eq:adagrad-coord-update}}, \label{eq:adagrad_complexity}\\
    \quad \text{and} \quad \Omega\left(\frac{\|\vL\|_1 \Delta_F}{{\color{red}\tilde{\phi}(\vL)}\epsilon^2} + \frac{{\color{red}R^2}\|\vsigma\|_1^2 \|\vL\|_1\Delta_F}{{\color{red}\phi(\vsigma)}\epsilon^4}\right) \quad &\text{for SGD}. \label{eq:SGD_complexity}
\end{align}
Except for the additional term $\frac{\|\vsigma\|_1^4}{\epsilon^4}$ in \eqref{eq:adagrad_complexity}, we observe that the two bounds in \eqref{eq:adagrad_complexity} and \eqref{eq:SGD_complexity} are similar. If we assume that the noise is relatively small, i.e., $\|\vsigma\|_1 \ll \sqrt{\|\vL\|_1 \Delta_F}$, the first two terms dominate. We can make the following observations: 
\begin{itemize}
    \item Since $ \tilde{\phi}(\vL) \in [\frac{1}{d},1]$, for the first noiseless term in \eqref{eq:adagrad_complexity} and \eqref{eq:SGD_complexity}, \ref{eq:adagrad-coord-update} is never worse than SGD and outperforms SGD by a factor of $\tilde{\phi}(\vL)$. In particular, in the extreme case where $\tilde{\phi}(\vL) = \frac{1}{d}$, i.e., the vector $\vL$ is sparse, \ref{eq:adagrad-coord-update} reduces the bound of SGD by a factor of $d$.
    \item Since $R \in [0,1]$ and $\phi(\vsigma) \in [\frac{1}{d},1]$, the second noise-dependent term in \ref{eq:adagrad-coord-update} can be either improve or worsen compared to SGD. In the extreme case where $R = 1$ and $\phi(\vsigma) = \frac{1}{d}$, i.e., the two vectors $[\sigma_1,\dots,\sigma_d]$ and $[\sqrt{L_1},\dots,\sqrt{L_d}]$ are aligned and the vector $\vsigma$ is sparse, then \ref{eq:adagrad-coord-update} similarly reduces the bound of SGD by a factor of $d$. 
\end{itemize}

\looseness = -1
To our knowledge, our results provide the first problem setting where \ref{eq:adagrad-coord-update} achieves provably better dimensional dependence than SGD in the non-convex setting.  
We note that our discussions here mirror the comparison between AdaGrad and Online Gradient Descent in \citep{mcmahan2010adaptive,duchi11a} regarding online convex optimization problems. Similarly, depending on the geometry of the feasible set and the density of the gradient vectors, it is shown that the rate of AdaGrad can be better or worse by a factor of $\sqrt{d}$. In this sense, our result complements this classical result and demonstrates that a similar phenomenon also occurs in the non-convex setting.

\section{Conclusion}
In this paper, we provided a theoretical justification for the advantage of AdaGrad over SGD in stochastic non-convex optimization. We first discussed the impossibility of showing any convergence rate improvement over SGD under the standard assumptions of Lipschitz gradients and bounded variance, as well as using the gradient's $\ell_2$-norm as the stationarity measure. 
Motivated by this observation, 
we introduced two refined assumptions on the Lipschitz constants and gradient noise of the objective (Assumptions~\ref{assumption:bounded-variance} and~\ref{assumption:Lsmooth}) and proposed using the gradient $\ell_1$-norm as the stationarity measure, which better suit the coordinate-wise nature of adaptive gradient methods. Under these refined assumptions, We established a convergence rate for~AdaGrad (Theorem~\ref{thm:adagrad-coord-main-result}) and a complexity {lower bound} for SGD (Theorem~\ref{thm:lower_bound_SGD}) in terms of the gradient's $\ell_1$-norm. Notably, by comparing AdaGrad's \emph{upper bound} with SGD's \emph{lower bound}, we demonstrated that the complexity of AdaGrad can be better than that of SGD by a factor of $d$.  
To our knowledge, this is the first result showing a provable advantage of adaptive gradient methods over SGD in non-convex optimization.
In addition, by presenting two lower bounds, we established that the noiseless term in our upper bound for AdaGrad is unimprovable up to a logarithmic factor (Theorems~\ref{thm:lower_bnd_adagrad} and~\ref{thm:lower_bound_l1}). 
\section*{Acknowledgments}
{This work is supported in part by NSF Grant CCF-2007668, the NSF AI Institute for Foundations of Machine Learning (IFML), and the Wireless Networking and Communications Group (WNCG) Industrial Affiliates Program at UT Austin.}

\printbibliography

\appendix

\section{Comparison with Existing Results on AdaGrad}
\label{appen:comparison_adagrad}

\green{To aid our discussions and comparisons with existing results, we rewrite our bound in Theorem~\ref{thm:adagrad-coord-main-result} in terms of the gradient's Lipschitz constants and the gradient noise variance as in Assumptions~\ref{assumption:variance_sd} and~\ref{assumption:smooth_sd}, commonly used in the literature. Specifically,  Assumption~\ref{assumption:bounded-variance} implies that $\expect{\|\vg_t - \nabla F(\w_t)\|_2^2} \leq \sum_{i=1}^d \sigma_i^2 = \|\vsigma\|_2^2$ and  Assumption~\ref{assumption:Lsmooth} implies that the function $F$ is $\|\vL\|_{\infty}$-Lipschitz. 
Thus, when we translate our bounds to the standard assumptions that are not tailored for coordinate-wise analysis, the ratios of $\frac{\|\bm{L}\|_1}{\|\bm{L}\|_\infty}$ and $\frac{\|\vsigma\|_1}{\|\vsigma\|_2}$ appear in the upper bound. Given the behavior of these ratios, the dependence of our final bound on $d$ could change, as described in the following cases:
\begin{itemize}
\vspace{-1mm}
    \item \textbf{Worst case:}
     In this case, we have $\frac{\|\vL\|_1}{\|\vL\|_\infty}  = \Theta( d)$ and $\frac{\|\vsigma\|_1}{\|\vsigma\|_2} = \Theta( \sqrt{d})$. %
     Then the bound in~\eqref{eq:fine_tuned} reduces to $ \tilde{\bigO}\Bigl(\sqrt{\frac{d\|\vL\|_{\infty}\Delta_F}{T}} + \sqrt{d}\left({\frac{{\|\vsigma\|^{2}_{2} {\|\vL\|_{\infty}\Delta_F}}}{{T}}} \right)^{\nicefrac{1}{4}} + \frac{{\sqrt{d}\|\vsigma\|_2}}{T^{\nicefrac{1}{4}}}\Bigr)$.
\item \textbf{Well-structured case:}
In this case, we have $\frac{\|\vL\|_1}{\|\vL\|_\infty}  = \bigO(1)$ and $\frac{\|\vsigma\|_1}{\|\vsigma\|_2} = \bigO( 1)$.This indicates that the curvature and gradient noise are heterogeneous and primarily influenced by a few dominant coordinates. Under such circumstances, 
our convergence rate in \eqref{eq:fine_tuned} becomes a dimensional-independent rate of \green{$ \tilde{\bigO}\Bigl(\sqrt{\frac{\|\vL\|_{\infty}\Delta_F}{T}} + \left({\frac{{\|\vsigma\|^{2}_{2} {\|\vL\|_{\infty}\Delta_F}}}{{T}}} \right)^{\nicefrac{1}{4}} + \frac{{\|\vsigma\|_2}}{T^{\nicefrac{1}{4}}}\Bigr)$.}
\end{itemize}}

Most of the existing works use the $\ell_2$-norm as a measure of convergence~\citep{shen2023unified,defossez2022simple,wang2023convergence,hong2024revisiting,zhou2024on}. The state-of-the-art result is \cite{zhou2024on}: with a fine-tuned step size, the authors show that, with high probability, \ref{eq:adagrad-coord-update} satisfies  
$
\frac{1}{T} \sumT {\norm{\gradF{\w_t}}^2_2} ={\mathcal{O}}\Bigl( \frac{d \green{G^2_{\infty}}}{T}+\frac{G_{\infty}\sqrt{\green{d \|\vL\|_{\infty} \Delta_F}}}{T^{1/2}}\Bigr), 
$ \green{where $G_{\infty}$ is the uniform upper bound on the stochastic gradient.}
If we use this result to show a bound for the $\ell_1$-norm, 
since $\|\gradF{\w_t}\|_1 = \Theta(\sqrt{d} \|\gradF{\w_t}\|_2)$ in the worst case,  
the upper bound becomes 
$\min_{t \in [T]}\norm{\gradF{\w_t}}_1 =\mathcal{O}\left(\frac{d \green{G_{\infty}}}{\sqrt{T}} + \frac{d^{3/4}\green{(G_{\infty}^2\|\vL\|_{\infty}\Delta_F)^{1/4}}}{T^{1/4}}\right)$,
which is worse than our bound by at least a factor of $d^{\nicefrac{1}{4}}$. 

Also, in \cite{liu2023high}, the authors considered the case that that the function is $L$-smooth and the noise of gradient is coordinate-wise subgaussian, i.e., $\expect{\exp(\lambda^2 (g_{t,i}-\gradFi{\w_t})^2)} \leq \exp (\lambda^2 \sigma_i^2)$ for all $\lambda$ such that $|\lambda|<\frac{1}{\sigma_i}$. Note that the subgaussian noise assumption is stronger than the bounded variance assumption in Assumption~\ref{assumption:bounded-variance}. Under these assumptions, they characterized the convergence rate of \ref{eq:adagrad-coord-update} in terms of the averaged $\ell_1$-norm of the gradient 
and their result is no better than $\tilde{\mathcal{O}}\Big(\frac{\Delta_1}{\sqrt{T}} + \frac{dL}{\sqrt{T}} + \frac{\sqrt{\Delta_F \|\vsigma\|_1}}{T^{{1}/{4}}} + \frac{\sqrt{d}\|\vsigma\|_1}{T^{{1}/{4}}} + \frac{\sqrt{dL \|\vsigma\|_1}}{T^{{1}/{4}}}\Big)$. Compared to our bounds in \eqref{eq:fine_tuned}, we observe that their term $\frac{\sqrt{d}\|\vsigma\|_1}{T^{{1}/{4}}}$ is worse than the corresponding term in ours by a factor of $\sqrt{d}$. Moreover, in the worst case where $\frac{\|\vL\|_1}{\|\vL\|_\infty}  = \Theta( d)$ and $\frac{\|\vsigma\|_1}{\|\vsigma\|_2} = \Theta( \sqrt{d})$, their overall bound is worse than ours by a factor of $\sqrt{d}$. \green{That said, the results in \cite{liu2023high} provide high-probability convergence guarantees and are thus not directly comparable to the in-expectation results presented in our work.}

\section{Proof of \texorpdfstring{Theorem~\ref{thm:adagrad-coord-main-result}}{Theorem 3.1}}\label{appen:adagrad_convergence}

In this section, we prove Theorem~\ref{thm:adagrad-coord-main-result}. Recall that we define $\eta_{t,i} = \frac{\eta}{b_{t,i} + \delta}$ and thus \ref{eq:adagrad-coord-update} can be rewritten as $w_{t+1,i} = w_{t,i} - \eta_{t,i} g_{t,i}$ for $i \in [d]$. 
Our starting point is applying Assumption~\ref{assumption:Lsmooth} to $\vw_t$ and $\vw_{t+1}$, yielding: 
\begin{equation}\label{eq:descent}
    \begin{aligned}
      F(\w_{t+1}) &\leq F(\w_{t}) + \langle \gradF{\w_t}, \w_{t+1} - \w_t \rangle + \sum_{i=1}^d \frac{L_i}{2}|w_{t+1,i} - w_{t,i}|^2 \\
      &= F(\w_{t}) - \sum_{i=1}^d \eta_{t,i}\gradFi{\w_t} g_{t,i}  + \sum_{i=1}^d \frac{L_i}{2}\eta_{t,i}^2g_{t,i}^2.
    \end{aligned}
  \end{equation} 
  If the step size $\eta_{t,i}$ were conditionally independent of the stochastic gradient $g_{t,i}$, then by taking the conditional expectation with respect to $\filter{t-1}$, the second term in the right-hand side of \eqref{eq:descent} would result in $-\eta_{t,i} \gradFi{\w_t}\expect{ g_{t,i} \given \filter{t-1}} = -\eta_{t,i} \gradFi{\w_t}^2$ by Assumption~\ref{assumption:unbiased}.  
However, as mentioned in the proof sketch, the difficulty is that the step size $\eta_{t,i}$ is computed using the stochastic gradient at the current iterate $\w_t$, and consequently $\expect{\eta_{t,i} g_{t,i} \given \filter{t-1}} \neq \eta_{t,i}\expect{g_{t,i} \given \filter{t-1}}$ in general. 

Following \cite{ward2020adagrad,faw2022power}, we tackle this challenge by introducing the decorrelated step size $\hat{\eta}_{t,i}$ in \eqref{eq:decorrelated}, which serves as a ``proxy'' step size that is decorrelated from~$\vg_t$. Specifically, 
note that $\etahat{t}{i}$ belongs to the filtration $\filter{t-1}$ and thus $\expect{\etahatti\gradFi{\w_t} g_{t,i} \given \filter{t-1}} = \etahatti\gradFi{\w_t}^2$, leading to the desired squared gradient that we aim to bound. Equipped with the decorrelated step size, in the following lemma we prove an upper bound on a (weighted) gradient square norm at the current iterate $\vw_t$.

\begin{lemma}\label{lemma:starting-point-1}
  Suppose Assumptions~\ref{assumption:unbiased} and~\ref{assumption:Lsmooth} hold. Consider the update rule in \ref{eq:adagrad-coord-update}  and recall the decorrelated step sizes defined in~\eqref{eq:decorrelated}. Then we have 
    \begin{align}
      \sumd \etahat{t}{i}\gradFi{\w_t}^2 & \leq  F(\w_t) - \expect{F(\w_{t+1}) \given \filter{t-1}} + \sumd \expect{(\etahat{t}{i}-\eta_{t,i}) \gradFi{\w_t} g_{t,i} \given \filter{t-1}} \nonumber\\
      &\phantom{{}={}} +\sumd\frac{L_i}{2}\expect{\eta_{t,i}^2g_{t,i}^2 \given \filter{t-1}}. \label{eq:starting-point-1}
  \end{align}
\end{lemma}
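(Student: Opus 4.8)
The plan is to start from the descent inequality \eqref{eq:descent}, take the conditional expectation given $\filter{t-1}$ on both sides, and then perform a simple algebraic decomposition of the cross term that isolates the decorrelated step size. Concretely, applying $\expect{\cdot \given \filter{t-1}}$ to \eqref{eq:descent} and using that $F(\w_t)$ and $\gradFi{\w_t}$ are $\filter{t-1}$-measurable (since $\filter{t-1}$ contains $\w_0,\dots,\w_t$), I would obtain
\[
  \expect{F(\w_{t+1}) \given \filter{t-1}} \leq F(\w_t) - \sumd \expect{\eta_{t,i}\gradFi{\w_t} g_{t,i} \given \filter{t-1}} + \sumd \frac{L_i}{2}\expect{\eta_{t,i}^2 g_{t,i}^2 \given \filter{t-1}}.
\]

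Next, I would rewrite each term of the cross sum by adding and subtracting the decorrelated step size, $-\eta_{t,i}\gradFi{\w_t} g_{t,i} = -\etahatti\gradFi{\w_t} g_{t,i} + (\etahatti - \eta_{t,i})\gradFi{\w_t} g_{t,i}$. The key point is that $\etahatti$, as defined in Definition~\ref{def:decor_stepsize}, depends only on $b_{t-1,i}^2$ (which is determined by $\g_0,\dots,\g_{t-1}$), the constant $\sigma_i$, and $\gradFi{\w_t}$ — all of which are $\filter{t-1}$-measurable — so $\etahatti$ is itself $\filter{t-1}$-measurable, unlike $\eta_{t,i}$, which involves $g_{t,i}^2$. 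Therefore, pulling $\etahatti\gradFi{\w_t}$ out of the conditional expectation and invoking Assumption~\ref{assumption:unbiased} gives
\[
  \expect{\etahatti\gradFi{\w_t} g_{t,i} \given \filter{t-1}} = \etahatti\gradFi{\w_t}\,\expect{g_{t,i} \given \filter{t-1}} = \etahatti\gradFi{\w_t}^2.
\]
Substituting this identity back, moving the resulting $\sumd \etahatti \gradFi{\w_t}^2$ term to the left-hand side, and moving $F(\w_t)$ across yields exactly \eqref{eq:starting-point-1}.

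I do not expect a genuine obstacle here: this lemma is essentially the ``right'' way to organize the descent step so that the decorrelated step size produces the target quantity $\sumd\etahatti\gradFi{\w_t}^2$ on the left-hand side. The only point requiring care is the measurability bookkeeping — verifying that $\etahatti\in\filter{t-1}$ while $\eta_{t,i}\notin\filter{t-1}$ — which is precisely what motivates Definition~\ref{def:decor_stepsize}. The genuinely hard work is deferred to subsequent lemmas, which must control the three terms appearing on the right-hand side of \eqref{eq:starting-point-1}: telescoping $\expect{F(\w_{t+1})\given\filter{t-1}}-F(\w_t)$ over $t$, bounding the discretization term $\sumd \frac{L_i}{2}\expect{\eta_{t,i}^2 g_{t,i}^2\given\filter{t-1}}$, and — most delicately — bounding the ``decorrelation gap'' $\sumd\expect{(\etahatti-\eta_{t,i})\gradFi{\w_t} g_{t,i}\given\filter{t-1}}$, none of which is part of the present statement.
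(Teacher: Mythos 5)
Your proposal is correct and follows essentially the same route as the paper: both take the conditional expectation of the descent inequality and exploit the $\filter{t-1}$-measurability of $\etahatti$ together with Assumption~\ref{assumption:unbiased} (the paper phrases your add-and-subtract step as the identity $\expect{-\etahatti\gradFi{\w_t}(\gradFi{\w_t}-g_{t,i})\given\filter{t-1}}=0$, which is the same thing). One small remark: the derivation actually places $F(\w_t)-\expect{F(\w_{t+1})\given\filter{t-1}}$ on the right-hand side, so the sign of that term in the displayed statement \eqref{eq:starting-point-1} appears to be a typo, as confirmed by how the bound is telescoped in the proof of Corollary~\ref{coro:bound_on_gradient}.
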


\begin{proof}
    Taking the expectation with respect to $\filter{t-1}$ in \eqref{eq:descent}, we obtain:
    \begin{equation}\label{eq:descent_after_expectation}
     \expect{F(\w_{t+1}) \given \filter{t-1}}-F(\w_t)=-\sumd \Bigl( \expect{\eta_{t,i}\gradFi{\w_t}g_{t,i} \given \filter{t-1}} + \frac{L_i}{2}\expect{\eta_{t,i}^2g_{t,i}^2 \given \filter{t-1}} \Bigr).
    \end{equation}
    Since $\hat{\eta}_{t,i}$ is independent from $g_{t,i}$ conditioned on $\filter{t-1}$, we can obtain that $\expect{ \etahat{t}{i}\gradFi{\w_t}g_{t,i} \given \filter{t-1}}=\etahat{t}{i}\gradFi{\w_t}\expect{g_{t,i} \given \filter{t-1}} = \etahat{t}{i}\gradFi{\w_t}^2$ by Assumption~\ref{assumption:unbiased}. 
    Hence, we get 
    \begin{align*}
        \expect{\eta_{t,i}\gradFi{\w_t}g_{t,i} \given \filter{t-1}} &= \expect{\hat{\eta}_{t,i}\gradFi{\w_t}g_{t,i} \given \filter{t-1}}  + \expect{(\eta_{t,i} - \hat{\eta}_{t,i})\gradFi{\w_t}g_{t,i} \given \filter{t-1}} \\
        & =  \etahat{t}{i}\gradFi{\w_t}^2  + \expect{(\eta_{t,i} - \hat{\eta}_{t,i})\gradFi{\w_t}g_{t,i} \given \filter{t-1}}. 
    \end{align*}
    Combining this with \eqref{eq:descent_after_expectation}, this further implies that 
    \begin{align*}
    \expect{F(\w_{t+1}) \given \filter{t-1}}-F(\w_t)
        &\leq  \sumd \Bigl(-\etahat{t}{i}\gradFi{\w_t}^2 -\expect{(\eta_{t,i}-\etahat{t}{i})\gradFi{\w_t}g_{t,i} \given \filter{t-1}} \\
        & \phantom{{}\leq{}} +\frac{L_i}{2}\expect{\eta_{t,i}^2g_{t,i}^2 \given \filter{t-1}}\Bigr).
    \end{align*}
    Rearranging the above inequality leads to \eqref{eq:starting-point-1}.
\end{proof}

In Lemma~\ref{lemma:starting-point-1}, the left-hand side is a weighted version of the squared gradient norm at $\w_t$, where the weights for each coordinate are given by the decorrelated step sizes $\hat{\eta}_{t,i}$. Note that this is the key difference compared to the analysis of \ref{eq:adagrad-norm} in \cite{faw2022power}. Indeed, for \ref{eq:adagrad-norm}, the left-hand side will become $\hat{\eta}_t\|\nabla F(\mathbf{w}_t)\|^2$, and thus the squared $\ell_2$-norm of the gradient naturally arises from the analysis. On the other hand, as we shall see later, in our case $\ell_2$-norm is not the best choice of the norm and instead we will relate the left-hand side in \eqref{eq:starting-point-1} to the $\ell_1$-norm of the gradient.

In light of Lemma~\ref{lemma:starting-point-1}, we need to manage the \emph{bias term} $\sumd \expect{(\etahat{t}{i}-\eta_{t,i}) \gradFi{\w_t} g_{t,i} \given \filter{t-1}}$, which is due to the difference between the step size $\eta_{t,i}$ and its decorrelated version $\etahatti$, and a \emph{quadratic term} $\sumd \mathbb{E}[\eta_{t,i}^2 g_{t,i}^2]$, which comes from Assumption~\ref{assumption:Lsmooth}. 
The following lemma addresses these two terms and the proofs for these two results are presented in Appendix~\ref{appen:bias}. 

\begin{lemma}\label{lem:bias}
    Consider the update rule in \ref{eq:adagrad-coord-update}. For any $t\in [T]$ and $i \in [d]$, we have 
    \begin{equation}\label{eq:bias}
      \expect{(\etahat{t}{i}-\eta_{t,i})\gradFi{\w_t} g_{t,i} \given \filter{t-1}} \leq \frac{\etahat{t}{i}}{2}\gradFi{\w_t}^2 +{\frac{2\sigma_i}{\eta }}\expect{\eta_{t,i}^2{g_{t,i}^2} \given \filter{t-1}}. 
    \end{equation}  
    Moreover, we have 
    \begin{equation}\label{eq:quadratic_term}
      \mathbb{E}\ \bigg[\sumT \eta_{t,i}^2g_{t,i}^2\bigg] %
      \leq \eta^2 \log h(T),
    \end{equation}
    where $h(T)=1+\frac{T\|\vsigma\|_{\infty}^2}{\delta^2}+ \frac{T(\|{\gradF{\w_1}}\|_{\infty} + \eta \sqrt{\|\vL\|_\infty\|\vL\|_1}T)^2}{\delta^2}$.
    \end{lemma}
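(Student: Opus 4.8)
The plan is to handle the two inequalities separately, both relying on elementary manipulations plus the key observation that $\hat\eta_{t,i} \le \eta_{t,i}$ is \emph{false} in general, so instead we must carefully compare $\hat\eta_{t,i}$ and $\eta_{t,i}$ via their denominators. For the bias term \eqref{eq:bias}, I would start by writing $\hat\eta_{t,i} - \eta_{t,i}$ as a single fraction: since both step sizes have the form $\eta/(\sqrt{A}+\delta)$ with $A_{\hat\eta} = b_{t-1,i}^2 + \sigma_i^2 + \nabla_i F(\w_t)^2$ and $A_{\eta} = b_{t-1,i}^2 + g_{t,i}^2$, rationalizing gives
\[
\hat\eta_{t,i} - \eta_{t,i} = \eta\,\frac{\sqrt{A_\eta}-\sqrt{A_{\hat\eta}}}{(\sqrt{A_{\hat\eta}}+\delta)(\sqrt{A_\eta}+\delta)} = \frac{\eta\,(A_\eta - A_{\hat\eta})}{(\sqrt{A_{\hat\eta}}+\delta)(\sqrt{A_\eta}+\delta)(\sqrt{A_{\hat\eta}}+\sqrt{A_\eta})},
\]
and $|A_\eta - A_{\hat\eta}| = |g_{t,i}^2 - \nabla_i F(\w_t)^2 - \sigma_i^2| \le |g_{t,i} - \nabla_i F(\w_t)|\,|g_{t,i}+\nabla_i F(\w_t)| + \sigma_i^2$. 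The goal is to bound $|\hat\eta_{t,i} - \eta_{t,i}|\,|\nabla_i F(\w_t)|\,|g_{t,i}|$; I would bound one factor $(\sqrt{A_{\hat\eta}}+\delta)$ in the denominator from below by its square root times $\delta^{1/2}$ or simply keep it, then absorb $\eta/(\sqrt{A_{\hat\eta}}+\delta) = \hat\eta_{t,i}$ and $\eta/(\sqrt{A_\eta}+\delta) = \eta_{t,i}$, leaving roughly $\hat\eta_{t,i}\,\eta_{t,i}\,|\nabla_i F|\,|g_{t,i}|\cdot(\text{stuff})/\eta$. Then split using Young's inequality $ab \le \tfrac12 a^2 + \tfrac12 b^2$ applied to $|\nabla_i F(\w_t)|$ against a term carrying $\eta_{t,i}|g_{t,i}|$, with the weight chosen to produce exactly the $\tfrac{\hat\eta_{t,i}}{2}\nabla_i F(\w_t)^2$ term on one side and a multiple of $\eta_{t,i}^2 g_{t,i}^2$ (with the $2\sigma_i/\eta$ coefficient) on the other; the $\sigma_i^2$ piece of $|A_\eta - A_{\hat\eta}|$ is handled directly since $\sigma_i/(\sqrt{A_\eta}+\delta) \le \sigma_i/\delta$ and similarly for $\hat\eta$, again folding into $\eta_{t,i}^2 g_{t,i}^2$ after one more Young step. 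Taking $\mathbb{E}[\cdot\mid\filter{t-1}]$ at the end (it only touches $g_{t,i}$-dependent quantities that remain after these deterministic bounds) yields \eqref{eq:bias}.

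For the quadratic-term bound \eqref{eq:quadratic_term}, the standard trick is the telescoping/integral-comparison lemma for AdaGrad-type denominators: writing $\eta_{t,i}^2 g_{t,i}^2 = \eta^2 g_{t,i}^2/(\sqrt{b_{t,i}^2}+\delta)^2 \le \eta^2 g_{t,i}^2/b_{t,i}^2 = \eta^2 (b_{t,i}^2 - b_{t-1,i}^2)/b_{t,i}^2$ (using $b_{t,i}^2 = b_{t-1,i}^2 + g_{t,i}^2$), and then using the elementary inequality $(x-y)/x \le \log(x/y)$ for $0 < y \le x$ to get $\sum_{t=1}^T \eta_{t,i}^2 g_{t,i}^2 \le \eta^2 \log(b_{T,i}^2 / b_{0,i}^2)$ — or, to avoid dividing by a possibly tiny $b_{0,i}^2$, one keeps the $\delta$ and gets $\eta^2\log\big((b_{T,i}^2+\delta^2)/\delta^2\big)$ type bound. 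This is pathwise (deterministic), so the remaining task is to upper bound $b_{T,i}^2 = \sum_{s=0}^{T} g_{s,i}^2$ inside the expectation. Using $g_{s,i}^2 \le 2(g_{s,i}-\nabla_i F(\w_s))^2 + 2\nabla_i F(\w_s)^2$, the variance term contributes $\le 2T\sigma_i^2$ in expectation (by Assumption~\ref{assumption:bounded-variance}), and the true-gradient term needs $|\nabla_i F(\w_s)|$ bounded; here I would use that the iterates move by at most $\eta$ per coordinate per step (since $|w_{s+1,i}-w_{s,i}| = \eta_{t,i}|g_{s,i}| \le \eta$ because $\eta_{t,i}|g_{t,i}| = \eta|g_{t,i}|/(\sqrt{\cdots}+\delta)\le \eta$), so $\|\w_s - \w_1\|_\infty \le \eta T$, hence $\|\w_s-\w_1\|_2 \le \eta\sqrt d T$, and by $L_i$-coordinate-smoothness $|\nabla_i F(\w_s)| \le |\nabla_i F(\w_1)| + L_i\|\w_s-\w_1\| \le \|\nabla F(\w_1)\| + \eta\sqrt d L_i T$. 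Plugging these crude bounds into $b_{T,i}^2$ produces exactly the stated $h_i(T)$, and finally Jensen ($\mathbb{E}[\log X]\le \log\mathbb{E}[X]$) moves the expectation inside the logarithm to give \eqref{eq:quadratic_term}.

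The main obstacle I anticipate is the bias term \eqref{eq:bias}: getting the constants to line up so that exactly $\tfrac{\hat\eta_{t,i}}{2}\nabla_i F(\w_t)^2$ appears (not $\tfrac{3}{4}$ or some other fraction) requires choosing the Young's-inequality weights precisely and being careful that in the cross term $|\hat\eta_{t,i}-\eta_{t,i}|\,|\nabla_i F|\,|g_{t,i}|$ one of the denominator factors is spent to produce $\hat\eta_{t,i}$ and the other to produce $\eta_{t,i}$, with the leftover $1/(\sqrt{A_{\hat\eta}}+\sqrt{A_\eta})$ bounded using $\sqrt{A_{\hat\eta}} \ge \sigma_i$ (when $\sigma_i>0$) or $\sqrt{A_\eta}\ge |g_{t,i}|$ to extract the $\sigma_i/\eta$ and cancel a stray $|g_{t,i}|$. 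The $\delta$ terms and the edge case $\sigma_i = 0$ need a quick separate check but are not serious. The quadratic bound is routine given the well-known log-telescoping lemma; the only mild care needed there is whether to carry $\delta$ through the logarithm, but since $h_i(T)$ as stated already has the $\delta^2$ in the denominator, keeping $b_{t,i}^2+\delta^2$ throughout is the clean route.
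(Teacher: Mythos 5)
Your skeleton matches the paper's: rationalize $\etahat{t}{i}-\eta_{t,i}$, split the numerator into $|\gradFi{\w_t}^2-g_{t,i}^2|$ and $\sigma_i^2$, absorb the two denominator factors into $\eta_{t,i}$ and $\etahat{t}{i}$, and for \eqref{eq:quadratic_term} use the telescoping log lemma plus a crude bound on the growth of the gradient norm. Your treatment of the quadratic term is essentially the paper's proof and is fine (two cosmetic differences: the paper bounds $\expect{g_{t,i}^2}\leq \sigma_i^2+\gradFi{\w_t}^2$ directly from Assumptions~\ref{assumption:unbiased}--\ref{assumption:bounded-variance} rather than via your factor-of-$2$ split, and it controls $|\gradFi{\w_s}|$ through $\|\nabla F(\w_s)\|$ with the global constant $L=\|\vL\|_\infty$; your coordinate-wise claim $|\gradFi{\vx}-\gradFi{\vy}|\leq L_i\|\vx-\vy\|$ does not literally follow from Assumption~\ref{assumption:Lsmooth}).

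The genuine gap is in the bias term \eqref{eq:bias}, specifically the order in which you apply Young's inequality and the conditional expectation. You plan to do the Young split pathwise ("deterministic bounds") and take $\expect{\cdot\given\filter{t-1}}$ "at the end." For the cross term this produces, after Young, a quantity proportional to $\frac{\etahat{t}{i}}{\eta^2}\,|\gradFi{\w_t}-g_{t,i}|^2\,\eta_{t,i}^2 g_{t,i}^2$, and its conditional expectation cannot be reduced to $\sigma_i^2\,\expect{\eta_{t,i}^2 g_{t,i}^2\given\filter{t-1}}$ because $|\gradFi{\w_t}-g_{t,i}|^2$ and $\eta_{t,i}^2 g_{t,i}^2$ both depend on $g_{t,i}$ and are correlated; the only generic bound, $\eta_{t,i}|g_{t,i}|\leq\eta$, yields an additive $O(\eta^2\sigma_i^2)$ per step, which is not summable to a logarithm and destroys the rate. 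The paper's fix is to take the conditional expectation \emph{first} and apply Cauchy--Schwarz in $L^2$, $\expect{XY\given\filter{t-1}}\leq\sqrt{\expect{X^2\given\filter{t-1}}\,\expect{Y^2\given\filter{t-1}}}$ with $X=|\gradFi{\w_t}-g_{t,i}|$ and $Y=\eta_{t,i}|g_{t,i}|$ (pulling out the $\filter{t-1}$-measurable factor $\etahat{t}{i}|\gradFi{\w_t}|$), which gives $\frac{\sigma_i}{\eta}\etahat{t}{i}|\gradFi{\w_t}|\sqrt{\expect{\eta_{t,i}^2g_{t,i}^2\given\filter{t-1}}}$; only then is Young applied, and the stated coefficient $\frac{2\sigma_i}{\eta}$ requires the further observation $\etahat{t}{i}\leq\eta/\sigma_i$, which your plan omits. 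Separately, your intermediate bound $\sigma_i/(\sqrt{A_\eta}+\delta)\leq\sigma_i/\delta$ for the $\sigma_i^2$ piece would inject a $1/\delta$ factor absent from \eqref{eq:bias}; the correct move, which you do gesture at in your closing paragraph, is $\sigma_i^2/(\sqrt{A_{\hat\eta}}+\sqrt{A_\eta}+2\delta)\leq\sigma_i$ since $\sqrt{A_{\hat\eta}}\geq\sigma_i$.
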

    The first result in 
    Lemma~\ref{lem:bias} shows that for each coordinate $i \in [d]$, we can upper bound the bias term in terms of the squared gradient $\frac{\etahat{t}{i}}{2}\gradFi{\w_t}^2$ and the quadratic term $\expect{\eta_{t,i}^2 g_{t,i}^2}$. The second result in the above lemma shows that the accumulation of the quadratic terms $\eta_{t,i}^2g_{t,i}^2$ over $T$ iterations can be bounded in expectation by $\mathcal{O}(\eta^2\log(T/\delta))$. By combining Lemma~\ref{lem:bias} with Lemma~\ref{lemma:starting-point-1}, we obtain the following key corollary.

    \begin{corollary}\label{coro:bound_on_gradient}
        Recall the definition of $h(T)$ in Lemma~\ref{lem:bias}. For \ref{eq:adagrad-coord-update}, we have 
      \begin{equation}\label{eq:bound_on_gradient_appendix}
      \expect{\sumT \sumd \frac{\etahat{t}{i}}{2}\gradFi{\w_t}^2} 
      \leq F(\w_1)-F^* +  \left({2\eta } \|\vsigma\|_1 + \frac{\eta^2 \|\vL\|_1}{2}\right) \log{h(T)}.
      \end{equation}
      \end{corollary}  
\begin{proof}
    By applying \eqref{eq:bias} to \eqref{eq:starting-point-1} in Lemma~\ref{lemma:starting-point-1}, we obtain that  
    \begin{align*}
        \sumd \etahat{t}{i}\gradFi{\w_t}^2 & \leq F(\w_t) - \expect{F(\w_{t+1}) \given \filter{t-1}} + \sumd \frac{\etahat{t}{i}}{2}\gradFi{\w_t}^2 \nonumber\\
        &\phantom{{}={}} +\sumd\left(\frac{L_i}{2} + \frac{2\sigma_i}{\eta}\right)\expect{\eta_{t,i}^2g_{t,i}^2 \given \filter{t-1}}. %
    \end{align*}
    By merging terms and taking the expectation of both sides of the inequality, we further have 
\begin{equation*}
    \expect{\sumd \frac{\etahat{t}{i}}{2}\gradFi{\w_t}^2} \leq \expect{F(\w_{t}) - F(\w_{t+1})} + \sumd\left(\frac{\eta^2 L_i}{2} + {2\eta \sigma_i}\right)\expect{\eta_{t,i}^2g_{t,i}^2}.  %
\end{equation*}
Now we sum the above the inequality over $t=1,\dots,T$ to get 
    \begin{align*}
        \expect{\sumT \sumd\frac{\etahat{t}{i}}{2}\gradFi{\w_t}^2} &\leq F(\w_{1}) - \expect{F(\w_{T+1})}  + \sumd \left({2\eta\sigma_i}+\frac{L_i\eta^2}{2}\right)\expect{\sumT\eta_{t,i}^2g_{t,i}^2}\\
        &\leq F(\w_{1}) - F^*  + \sumd \left({2\eta\sigma_i}+\frac{L_i\eta^2}{2}\right)\log{h(T)} \\
        & = F(\w_{1}) - F^*  + \left({2\eta\|\vsigma\|_1}+\frac{\|\vL\|_1\eta^2}{2}\right)\log{h(T)},
    \end{align*}
    where we used Assumption~\ref{assumption:lower_bounded} and \eqref{eq:quadratic_term} in the second inequality. This completes the proof. 
\end{proof} 
To simplify the notation, let us denote the right-hand side of \eqref{eq:bound_on_gradient_appendix} by $Q$. This implies that, if we ignore the logarithmic term, we have  $Q = \mathcal{\Tilde{O}}\left(F(\w_1)-F^* + \eta  \|\vsigma\|_1+ \eta^2 \|\vL\|_1\right)$.
Corollary~\ref{coro:bound_on_gradient} shows that the sum of weighted squared gradient norms is bounded by a constant depending on problem parameters, up to log factors. Hence, the remaining task is to establish lower bounds on the step sizes $\etahatti$. For instance, if we were able to show that all the step sizes $\hat{\eta}_{t,i}$ are uniformly lower bounded by $\tilde{\Omega}(\frac{1}{\sqrt{T}})$, then Corollary~\ref{coro:bound_on_gradient} would immediately imply a rate of $\tilde{O}(\frac{1}{T^{1/4}})$ in terms of the gradient $\ell_2$-norm $\|\nabla F(\w_t)\|_2$. However, there are several challenges: (i) The step sizes $\etahat{t}{i}$ are determined by the observed stochastic gradient rather than specified by the user. (ii) To further complicate the issue, due to correlation between the step size $\etahatti$ and the iterate $\vw_t$, this implies that $\expect{\etahatti \gradFi{\w_t}^2} \neq \expect{\etahatti} \expect{\gradFi{\w_t}^2}$ and hence a lower bound on $\expect{\etahatti}$ would not suffice. (iii) Finally, since the step sizes for each coordinate are updated independently, it is unclear how to construct a uniform lower bound across all the coordinates. 

As mentioned in the proof sketch, to address the last challenge, we study each coordinate and construct a uniform lower bound on $\etahatti$ for $t\in [T]$. Specifically, for each coordinate $i\in [d]$, we define a new auxiliary step size $\tilde{\eta}_{T,i}$ as %
\begin{equation}\label{eq:auxiliary_step_size_appendix}
  \tilde{\eta}_{T,i} = \frac{\eta}{\sqrt{\sum_{i=1}^{T-1} g^2_{t,i} + \sumT \gradFi{\w_t}^2 + \sigma_i^2} + \delta}.
\end{equation} 
From~\eqref{eq:decorrelated} and $b_{t-1,i} = \sum_{s=1}^{t-1} g_{s,i}^2$ in \eqref{eq:adagrad-coord-update}, we have $\etahatti \geq \tilde{\eta}_{T,i}$ for all $t\in[T]$. To address the second issue, we separate the step sizes from the gradients as follows:
\begin{equation}\label{eq:cauchy-schwarz_appendix}
  \expect{\sumT \frac{\etahat{t}{i}}{2}\gradFi{\w_t}^2} \geq \expect{\frac{\tilde{\eta}_{T,i}}{2}\sumT {\gradFi{\w_t}^2} } \geq \frac{\expect{\sqrt{\sumT \gradFi{\w_t}^2}}^2}{\expect{\frac{2}{\tilde{\eta}_{T,i}}}},
\end{equation}
where we used the elementary inequality that $\expect{\frac{X^2}{Y}} \geq \frac{\expect{{X}}^2}{\expect{Y}}$ for any two positive random variables $X$ and $Y$. 
Hence, in the following lemma, we will establish an upper bound on $\expect{\frac{1}{\tilde{\eta}_{T,i}}}$, instead of directly lower bounding $\expect{{\tilde{\eta}_{T,i}}}$. 

\begin{lemma}\label{lemma:upper-bound-etahat}
    Consider the step size $\tilde{\eta}_{T,i}$ defined in \eqref{eq:auxiliary_step_size_appendix}. For any $i \in [d]$, we have 
    $$\expect{\frac{1}{\tilde{\eta}_{T,i}}}\leq \frac{\sigma_i\sqrt{ 2T}+\delta}{\eta}+\frac{\sqrt{3}}{\eta}\expect{\sqrt{\sumT \gradFi{\w_t}^2}}.$$
\end{lemma}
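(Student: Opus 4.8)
The plan is to start from the closed form of the auxiliary step size, split the radicand into a ``noise part'' and a ``true-gradient part,'' and apply Jensen's inequality only to the former. Since by \eqref{eq:auxiliary_step_size} we have $\frac{1}{\tilde{\eta}_{T,i}} = \frac{1}{\eta}\bigl(\sqrt{\sum_{s=1}^{T-1}\sgradi{s}{i}^2 + \sumT\gradFi{\w_t}^2 + \sigma_i^2} + \delta\bigr)$, taking expectations and peeling off the deterministic $\delta/\eta$ term reduces the claim to the bound $\expect{\sqrt{\sum_{s=1}^{T-1}\sgradi{s}{i}^2 + \sumT\gradFi{\w_t}^2 + \sigma_i^2}} \le \sigma_i\sqrt{2T} + \sqrt{3}\,\expect{\sqrt{\sumT\gradFi{\w_t}^2}}$.

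First I would use $(a+b)^2 \le 2a^2 + 2b^2$ to write $\sgradi{s}{i}^2 \le 2(\sgradi{s}{i} - \gradFi{\w_s})^2 + 2\gradFi{\w_s}^2$ for each $s$, sum over $s \in [T-1]$, and bound $\sum_{s=1}^{T-1}\gradFi{\w_s}^2 \le \sumT\gradFi{\w_t}^2$; this shows the radicand is at most $2\sum_{s=1}^{T-1}(\sgradi{s}{i}-\gradFi{\w_s})^2 + \sigma_i^2 + 3\sumT\gradFi{\w_t}^2$. Then applying $\sqrt{x+y}\le\sqrt{x}+\sqrt{y}$ with $x := 2\sum_{s=1}^{T-1}(\sgradi{s}{i}-\gradFi{\w_s})^2 + \sigma_i^2$ and $y := 3\sumT\gradFi{\w_t}^2$ isolates the gradient contribution as $\sqrt{3}\,\sqrt{\sumT\gradFi{\w_t}^2}$, which then passes through the expectation untouched.

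It remains to bound $\expect{\sqrt{x}}$. Here I would use Jensen's inequality for the concave map $\sqrt{\cdot}$, namely $\expect{\sqrt{x}} \le \sqrt{\expect{x}}$, together with the tower rule and Assumption~\ref{assumption:bounded-variance}: since $\w_s$ (hence $\gradFi{\w_s}$) is $\filter{s-1}$-measurable, $\expect{(\sgradi{s}{i}-\gradFi{\w_s})^2 \given \filter{s-1}} \le \sigma_i^2$, so $\expect{x} \le 2(T-1)\sigma_i^2 + \sigma_i^2 = (2T-1)\sigma_i^2$ and $\expect{\sqrt{x}} \le \sigma_i\sqrt{2T-1} \le \sigma_i\sqrt{2T}$. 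Combining the two estimates, dividing by $\eta$, and re-adding $\delta/\eta$ yields the stated inequality. The only delicate point is the one already flagged before the lemma: the right-hand side is phrased with $\expect{\sqrt{\sumT\gradFi{\w_t}^2}}$, not $\sqrt{\expect{\sumT\gradFi{\w_t}^2}}$, so Jensen must be confined to the noise term while the true-gradient sum stays inside its own square root; producing the clean constants $\sqrt{3}$ and $\sqrt{2T}$ relies on the bookkeeping $2+1=3$ and on folding $\sigma_i^2$ into $x$ before invoking Jensen rather than separating it out via $\sqrt{x+y+\sigma_i^2}\le\sqrt{x}+\sqrt{y}+\sigma_i$, which would be slightly lossier.
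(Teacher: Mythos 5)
Your proposal is correct and follows essentially the same route as the paper's proof: bound $g_{s,i}^2 \le 2(g_{s,i}-\gradFi{\w_s})^2 + 2\gradFi{\w_s}^2$, split the square root via $\sqrt{x+y}\le\sqrt{x}+\sqrt{y}$ so that the factor $\sqrt{3}$ sits on the gradient sum, and apply Jensen plus Assumption~\ref{assumption:bounded-variance} only to the noise-plus-$\sigma_i^2$ part to get $\sigma_i\sqrt{2T}$. The subtlety you flag—keeping $\expect{\sqrt{\sumT\gradFi{\w_t}^2}}$ intact rather than Jensen-izing it—is exactly how the paper handles it as well.
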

\begin{proof}
    From the definition of ${\tilde{\eta}_{T,i}}$ and using $b_{t-1,i}^2=\sum_{s=1}^{t-1}g_{s,i}^2 \leq \sum_{t=1}^{T-1} g_{t,i}^2$, we have 
    \begin{equation*}
        \expect{\frac{\eta}{{\tilde{\eta}_{T,i}}}} 
        \leq \expect{\sqrt{\sum_{t=1}^{T} g_{t,i}^2 + \sigma_{i}^2 + \sumT \gradFi{\w_t}^2} +\delta}.
    \end{equation*}
    We then can use the upper bound of $g_{t,i}^2\leq 2((g_{t,i}-\gradFi{\w_t})^2+\gradFi{\w_t}^2)$:
    \begin{align*}
        \expect{\frac{\eta}{{\tilde{\eta}_{T,i}}}}&\leq \expect{\sqrt{\sum_{t=1}^{T} 2((g_{t,i}-\gradFi{\w_t})^2+\gradFi{\w_t}^2)+\sigma_{i}^2+\sumT\gradFi{\w_t}^2}+\delta}\\
        & = \expect{\sqrt{ 2\sum_{t=1}^{T-1}(g_{t,i}-\gradFi{\w_t})^2+3\sumT \gradFi{\w_t}^2+\sigma_{i}^2 }+\delta}\\
        &\leq \expect{\sqrt{ 2\sum_{t=1}^{T-1}(g_{t,i}-\gradFi{\w_t})^2+\sigma_{i}^2}}+\expect{\sqrt{3\sumT \gradFi{\w_t}^2}} +\delta.
    \end{align*}
    Applying Jensen's inequality and the bounded variance from  Assumption \ref{assumption:bounded-variance}, we get 
    \begin{align*}
        \expect{\frac{\eta}{{\tilde{\eta}_{T,i}}}}&\leq \sqrt{ 2\sum_{t=1}^{T-1}\expect{(g_{t,i}-\gradFi{\w_t})^2}+\sigma_{i}^2}+\expect{\sqrt{3\sumT \gradFi{\w_t}^2}}+\delta\\
        &\leq \sqrt{ 2T\sigma_{i}^2}+\sqrt{3}\expect{\sqrt{\sumT \gradFi{\w_t}^2}}+\delta
    \end{align*}
    Rearranging the terms immediately leads to the stated lemma.
\end{proof}

Lemma~\ref{lemma:upper-bound-etahat} establishes an upper bound  on $\expect{\frac{1}{\tilde{\eta}_{T,i}}}$ in terms of the sum $ \expect{\sqrt{\sumT \gradFi{\w_t}^2}}$, which also appears on the right hand side of \eqref{eq:cauchy-schwarz}.  
By combining Corollary~\ref{coro:bound_on_gradient}, \eqref{eq:cauchy-schwarz} and Lemma~\ref{lemma:upper-bound-etahat}, we arrive at the following lemma. 
\begin{lemma}\label{lem:bound_on_sum_sqrt}
  Consider the update in \ref{eq:adagrad-coord-update} and recall that $Q$ denotes the right-hand side in~\eqref{eq:bound_on_gradient}. It holds that 
  \begin{equation}\label{eq:bound_on_sum_sqrt_appendix}
      \expect{\sumd\sqrt{\sumT \gradFi{\w_t}^2}}
      \leq \frac{2\sqrt{3}}{\eta}Q + \sqrt{\frac{2d\delta Q}{\eta}} + 2 \sqrt{\frac{\|\vsigma\|_1Q}{\eta}}T^{\frac{1}{4}}. 
  \end{equation}
\end{lemma}
\begin{proof}
    It follows from \eqref{eq:cauchy-schwarz_appendix} that 
    \begin{align*}
        \expect{\sqrt{\sumT \gradFi{\w_t}^2}}^2&\leq \expect{\sumT \frac{\etahat{t}{i}}{2}\gradFi{\w_t}^2}\expect{\frac{2}{\Tilde{\eta}_{T,i}}}.
    \end{align*}
    Using the result from Lemma \ref{lemma:upper-bound-etahat}, we get a quadratic inequality as follows:
    \begin{align*}
        \expect{\sqrt{\sumT \gradFi{\w_t}^2}}^2\!\!&\leq \expect{\sumT \frac{\etahat{t}{i}}{2}\gradFi{\w_t}^2}{\expect{\frac{2}{\tilde{\eta}_{T,i}}}}\\
        &\leq{\frac{2}{\eta}} {\expect{\sumT \frac{\etahat{t}{i}}{2}\gradFi{\w_t}^2}}\Biggl({(\sigma_{i}\sqrt{2T}+\delta)\!+\!\sqrt{3}\expect{\sqrt{\sumT \gradFi{\w_t}^2}}}\Biggr).
    \end{align*}
    Solving the quadratic in terms of $\expect{\sqrt{\sumT \gradFi{\w_t}^2}}$, we have the following bound:
    \begin{equation*}
        \expect{\sqrt{\sumT \gradFi{\w_t}^2}}\!\leq\frac{2\sqrt{3}}{\eta}\expect{\sumT \frac{\etahat{t}{i}}{2}\gradFi{\w_t}^2}+\sqrt{{\frac{2}{\eta}}{(\sigma_{i}\sqrt{2T}+\delta)}\expect{\sumT \frac{\etahat{t}{i}}{2}\gradFi{\w_t}^2}}.
    \end{equation*}
    Combining the bounds from all the coordinates and using the Cauchy-Schwartz inequality for the second term: 
    \begin{align}
        \expect{\sumd\sqrt{\sumT \gradFi{\w_t}^2}}&\leq\frac{2\sqrt{3}}{\eta}\expect{\sumd\sumT \frac{\etahat{t}{i}}{2}\gradFi{\w_t}^2}\nonumber\\
        &+\sqrt{\frac{2}{\eta}}\sqrt{\sumd \sigma_{i}\sqrt{2T}+d\delta}\sqrt{\expect{\sumd \sumT \frac{\etahat{t}{i}}{2}\gradFi{\w_t}^2}}
    \end{align}
    
    We can further bound the term using the result from Corollary \ref{coro:bound_on_gradient},
    \begin{equation*}
        \expect{\sumd\sqrt{\sumT \gradFi{\w_t}^2}}\leq \frac{2\sqrt{3}}{\eta}Q+\sqrt{\frac{2}{\eta}}\sqrt{ (\|\vsigma\|_1\sqrt{2T}+d\delta)}\sqrt{Q},
    \end{equation*}
     where $Q$ is given by the right-hand side of~\eqref{eq:bound_on_gradient}. 
     This completes the proof.
\end{proof}

Finally, we relate the left-hand side of \eqref{eq:bound_on_sum_sqrt_appendix} to the $\ell_1$-norm of the gradients. Specifically, 
we can write: 
\begin{equation*}
  \frac{1}{T}\! \sumT \norm{\gradF{\w_t}}_1 \!=\! \frac{1}{T} \sumT \sumd |\gradFi{\w_t}| \!=\! \frac{1}{T}  \sumd \sumT |\gradFi{\w_t}| \!\leq\!\frac{1}{\sqrt{T}}  \sumd \sqrt{\sumT |\gradFi{\w_t}|^2},   
\end{equation*}
which implies that 
\begin{equation*}
    \frac{1}{T} \sumT \expect{\norm{\gradF{\w_t}}_1} \leq \frac{2\sqrt{3}Q}{\eta \sqrt{T}} + \sqrt{\frac{2d\delta Q}{\eta T}} + 2 \sqrt{\frac{\|\vsigma\|_1Q}{\eta}}\frac{1}{T^{1/4}}.
\end{equation*}
Since $Q = \mathcal{{O}}\left(F(\w_1)-F^* + (\eta \|\vsigma\|_1  + \eta^2 \|\vL\|_1)\log h(T)\right)$ and $\delta < \frac{1}{d}$, we obtain the result in Theorem~\ref{thm:adagrad-coord-main-result}. 

\subsection{Proof of \texorpdfstring{Lemma~\ref{lem:bias}}{Lemma B.2}}
\label{appen:bias}

Before we prove Lemma~\ref{lem:bias}, we first present two helper lemmas. 

\begin{lemma} \label{appendix:lemma:helper-log-bound-adagrad} Let  $\{a_s\}^{\infty}_{s=1}$ be any sequence such that $a_s\geq 0$ for all $s$. 
    Moreover, define $A_{t} = A_{t-1} + a_t$, where $A_0 = 0$. 
    Then we have 
    \begin{align}
        \sum_{t=1}^{T} \frac{a_t}{A_t + \delta^2} &\leq \log{\left(1+ \frac{A_T}{\delta^2} \right)}
    \end{align}

        \end{lemma}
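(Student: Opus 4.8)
The plan is to bound each summand by a difference of logarithms and then telescope. The key elementary fact I will use is that for any reals $0 < y \le x$ one has $1 - \frac{y}{x} \le \log\!\big(\frac{x}{y}\big)$, equivalently $\frac{x-y}{x} \le \log x - \log y$. This follows from the concavity of $\log$ (or from $\log(1+u) \le u$ applied with $u = \frac{x-y}{y} \ge 0$, together with $\frac{x-y}{x} \le \frac{x-y}{y}$ when $x \ge y \ge 0$; alternatively just from $e^{-s} \ge 1 - s$ with $s = \frac{x-y}{x} \in [0,1)$).

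First I would set $x = A_t + \delta^2$ and $y = A_{t-1} + \delta^2$. Since $a_t \ge 0$ we have $A_t = A_{t-1} + a_t \ge A_{t-1} \ge 0$, so $x \ge y > 0$ (strict positivity because $\delta > 0$), and $x - y = a_t$. Applying the elementary inequality above gives, for every $t \in [T]$,
\[
  \frac{a_t}{A_t + \delta^2} = \frac{x-y}{x} \le \log x - \log y = \log(A_t + \delta^2) - \log(A_{t-1} + \delta^2).
\]
(When $a_t = 0$ both sides are $0$, so this also covers the degenerate coordinates, and no division by zero ever occurs since $A_t + \delta^2 \ge \delta^2 > 0$.)

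Next I would sum this inequality over $t = 1, \dots, T$. The right-hand side telescopes, leaving $\log(A_T + \delta^2) - \log(A_0 + \delta^2)$. Since $A_0 = 0$, this equals $\log(A_T + \delta^2) - \log(\delta^2) = \log\!\big(\frac{A_T + \delta^2}{\delta^2}\big) = \log\!\big(1 + \frac{A_T}{\delta^2}\big)$, which is exactly the claimed bound.

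There is essentially no hard step here; the only thing to be careful about is justifying the elementary logarithmic inequality and checking the edge cases ($a_t = 0$, and the fact that $\delta > 0$ keeps all denominators and arguments of $\log$ strictly positive). This lemma will then be combined with Lemma~\ref{appendix:lemma:controlling-gradients} — which controls $g_{t,i}^2$ by $\big(\norm{\gradF{\w_1}} + \eta\sqrt{d}L_i t\big)^2$ — by taking $a_t = g_{t,i}^2$, $A_t = b_{t,i}^2$, and $\delta^2$ the stabilization constant, to derive the bound \eqref{eq:quadratic_term} on $\mathbb{E}\big[\sum_{t=1}^T \eta_{t,i}^2 g_{t,i}^2\big]$ in Lemma~\ref{lem:bias}.
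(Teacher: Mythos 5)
Your proposal is correct and follows essentially the same route as the paper: write $\frac{a_t}{A_t+\delta^2} = 1 - \frac{A_{t-1}+\delta^2}{A_t+\delta^2} \le \log\frac{A_t+\delta^2}{A_{t-1}+\delta^2}$ (the paper cites the inequality $x \le -\log(1-x)$) and telescope, using $A_0=0$ to get the stated bound. No gaps; your extra care with the $a_t=0$ edge case and positivity of denominators is fine but not a substantive difference.
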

    \begin{proof}
        The proof is similar to \cite[Lemma 15]{faw2022power} and we repeat here for completeness. Note that for any $t \geq 1$, we have 
        \begin{equation*}
            \frac{a_t}{A_t + \delta^2} = 1 - \frac{A_{t-1}+\delta^2}{A_t + \delta^2} \leq \log \left(\frac{A_t+\delta^2}{A_{t-1}+\delta^2}\right).
        \end{equation*}
        The last step follows from $x\leq -\log(1-x)$. Summing the above inequalities from $t=1$ to $t=T$, we obtain that
        \begin{equation*}
            \sum_{t=1}^T \frac{a_t}{A_t +\delta^2} \leq \log \left(\frac{A_T + \delta^2}{A_0 +\delta^2}\right) = \log \left(1+ \frac{A_T}{\delta^2}\right).
        \end{equation*}
        This completes the proof.
    \end{proof}

    \begin{lemma}\label{appendix:lemma:controlling-gradients}
        Suppose that Assumption~\ref{assumption:Lsmooth} holds and consider the update rule in \ref{eq:adagrad-coord-update}. 
        Then for any coordinate $i \in [d]$ and iteration $t \geq 0$, we have 
        \begin{equation}\label{eq:gradient_diff_c}
            |\gradFi{\w_{t+1}}-\gradFi{\w_{t}}|\leq {\eta \sqrt{L_i\|\vL\|_1}}.
        \end{equation}
        As a corollary, this implies that
        \begin{equation}\label{eq:gradient_bound}
            |{\gradFi{\w_t}}| \leq |{\gradFi{\w_1}}|+ \eta \sqrt{L_i\|\vL\|_1}t \leq \|{\gradF{\w_1}}\|_{\infty} + \eta \sqrt{\|\vL\|_\infty\|\vL\|_1}t.
        \end{equation}
    \end{lemma}
    \begin{proof}
    To begin with, we prove that if Assumption~\ref{assumption:Lsmooth} holds, then for any vectors $\vx,\vy \in \reals^d$,
    \begin{equation}\label{eq:gradient_diff}
        \sum_{i=1}^d \frac{1}{L_i} | \gradFi{\vx} - \gradFi{\vy} |^2 \leq \sum_{i=1}^d L_i | x_i - y_i|^2. 
    \end{equation}
    To see this, define the weighted Euclidean norm $\|\cdot\|_{\vL}$ as $\|\vx\|_{\vL} := \sqrt{\sum_{i=1}^d L_i x_i^2}$ and correspondingly its dual norm is given by $\|\vx\|_{\vL,*} := \sqrt{\sum_{i=1}^d \frac{1}{L_i} x_i^2}$. Thus, we can rewrite Assumption~\ref{assumption:Lsmooth} as $|F(\vy) - F(\vx) - \langle \gradF{\vx}, \vy-\vx \rangle | \leq \frac{1}{2}\|\vy - \vx\|^2_{\vL}$. This is equivalent to the fact that the gradient $\gradF{\vx}$ is 1-Lipschitz with respect to the norm $\|\cdot\|_{\vL}$, i.e., $\|\nabla F(\vx) - \nabla F(\vy)\|_{\vL,*} \leq \|\vx-\vy\|_{\vL}$. Squaring both sides of the inequality leads to \eqref{eq:gradient_diff}. 
    
    Applying \eqref{eq:gradient_diff} to 
    the two consecutive iterates 
    $\vw_{t+1}$ and $\vw_t$, we obtain $\sum_{i=1}^d \frac{1}{L_i} | \gradFi{\vw_{t+1}} - \gradFi{\vw_t} |^2 \leq \sum_{i=1}^d L_i | w_{t+1,i} - w_{t,i}|^2$. Moreover, note that from the update rule of \ref{eq:adagrad-coord-update}, it holds that 
    \begin{equation*}
        | w_{t+1,i} - w_{t,i}| = \eta \Bigl|\frac{g_{t,i}}{b_{t,i}+\delta}\Bigr| \leq \eta \Bigl|\frac{g_{t,i}}{\sqrt{b_{t-1,i}^2+g_{t,i}^2}+\delta}\Bigr| \leq \eta.
    \end{equation*}
    Hence, we further have $\sum_{i=1}^d \frac{1}{L_i} | \gradFi{\vw_{t+1}} - \gradFi{\vw_t} |^2 \leq \eta \sum_{i=1}^d L_i = \eta \|\vL\|_1$, which implies~\eqref{eq:gradient_diff_c}.

        Applying the triangle inequality, we have: 
        \begin{equation*}
            |{\gradFi{\w_t}}| \leq  |{\gradFi{\w_1}}| + \sum_{s=1}^{t-1} |\gradFi{\w_{s+1}}-\gradFi{\w_{s}}| \leq  |{\gradFi{\w_1}}| +  \eta \sqrt{L_i\|\vL\|_1}t.
        \end{equation*}
        Since $|{\gradFi{\w_1}}| \leq \|{\gradF{\w_1}}\|_{\infty}$ and $L_i \leq \|\vL\|_\infty$ for any $i \in [d]$, we obtain \eqref{eq:gradient_bound}.  
    \end{proof}

Now we are ready to prove Lemma~\ref{lem:bias}. 
Recall from the definition of \ref{eq:adagrad-coord-update} that 
\begin{equation}\label{eq:stepsize_adagrad}
    \eta_{t,i}=\frac{\eta}{\sqrt{b_{t-1,i}^2+g_{t,i}^2} +\delta} \quad \text{and} \quad \etahat{t}{i}=\frac{\eta}{\sqrt{b_{t-1,i}^2+\gradFi{\w_t}^2+\sigma_{i}^2} +\delta}. 
\end{equation} 
Let $a=b_{t-1,i}^2+g_{t,i}^2 $ and $b={b_{t-1,i}^2+\gradFi{\w_t}^2+\sigma_{i}^2}$. Then   
    \begin{align*}
        \left|{\eta_{t,i}-\etahat{t}{i}}\right| = \eta\left|\frac{1}{\sqrt{a}+\delta}-\frac{1}{\sqrt{b}+\delta}\right|&=\eta\left|\frac{b-a}{(\sqrt{a}+\delta)(\sqrt{b}+\delta)(\sqrt{a}+\sqrt{b})}\right|\\
        &=\eta\left|\frac{\gradFi{\w_t}^2+\sigma_{i}^2-g_{t,i}^2}{(\sqrt{a}+\delta)(\sqrt{b}+\delta)(\sqrt{a}+\sqrt{b})}\right|\\
        &\leq \frac{\eta|\gradFi{\w_t}^2-g_{t,i}^2| + \eta\sigma_{i}^2}{(\sqrt{a}+\delta)(\sqrt{b}+\delta)(\sqrt{a}+\sqrt{b})}. %
    \end{align*}
    Since $\sqrt{a} \geq |g_{t,i}|$, $\sqrt{b} \geq \max\{|\gradFi{\w_t}|,\sigma_{i}\}$, we have $|\gradFi{\w_t}^2-g_{t,i}^2| \leq |\gradFi{\w_t}-g_{t,i}|(|\gradFi{\w_t}|+|g_{t,i}|) \leq |\gradFi{\w_t}-g_{t,i}| (\sqrt{a} + \sqrt{b})$ and $\sigma_i^2 \leq \sigma_i (\sqrt{a}+\sqrt{b})$. Therefore,
    \begin{equation*}
        \left|{\eta_{t,i}-\etahat{t}{i}}\right| \leq \frac{\eta|\gradFi{\w_t}-g_{t,i}| + \eta\sigma_{i}}{(\sqrt{a}+\delta)(\sqrt{b}+\delta)} = \frac{1}{\eta}\left(|\gradFi{\w_t}-g_{t,i}| + \sigma_{i}\right) \eta_{t,i} \hat{\eta}_{t,i},
    \end{equation*}
    where we used $\eta_{t,i} = \frac{\eta}{\sqrt{a}+\delta}$ and $\hat{\eta}_{t,i} = \frac{\eta}{\sqrt{b}+\delta}$ in the last inequality. 
    Hence we have,
    \begin{align*}
        {\vert(\eta_{t,i}-\etahat{t}{i}) \gradFi{\w_t} g_{t,i}\vert } 
        &\leq \frac{1}{\eta}{\eta_{t,i} \hat{\eta}_{t,i}{(\vert\gradFi{\vw_t}-g_{t,i}\vert+\sigma_{i})\vert \gradFi{\w_t} g_{t,i}\vert} }\\
        &= \frac{\eta_{t,i} \hat{\eta}_{t,i}}{\eta}{{\vert\gradFi{\vw_t}-g_{t,i}\vert\cdot \vert\gradFi{\vw_t} g_{t,i}\vert} }+\frac{\sigma_{i}\eta_{t,i} \hat{\eta}_{t,i}}{\eta}{{\vert\gradFi{\vw_t}g_{t,i}\vert} }.
    \end{align*}
    Using the Cauchy-Schwartz inequality, we further have
    \begin{align*}
        &\phantom{{}\leq{}}\expect{ {\eta_{t,i} \hat{\eta}_{t,i}{\vert\gradFi{\vw_t}-g_{t,i}\vert\cdot \vert\gradFi{\vw_t} g_{t,i}\vert} } \given \filter{t-1}} \\
        &\leq \hat{\eta}_{t,i} \vert\gradFi{\vw_t}\vert\sqrt{\expect{\vert\gradFi{\vw_t}-g_{t,i}\vert^2 \given \filter{t-1}} \expect{ \eta_{t,i}^2 g_{t,i}^2 \given \filter{t-1}}} \\
        & \leq \sigma_{i}\hat{\eta}_{t,i}  \vert\gradFi{\vw_t}\vert \sqrt{\expect{ \eta_{t,i}^2 g_{t,i}^2 \given \filter{t-1}}}
    \end{align*}
    where the last step follows from the bounded variance in Assumption~\ref{assumption:bounded-variance}. 
    We proceed to bound the second term in a similar manner:
    \begin{equation*}
    \expect{\sigma_{i} {\eta_{t,i} \hat{\eta}_{t,i}{\vert\gradFi{\vw_t} g_{t,i}\vert} } \given \filter{t-1}} 
        \leq \sigma_{i}\hat{\eta}_{t,i} \vert\gradFi{\vw_t}\vert\sqrt{\expect{ \eta_{t,i}^2 g_{t,i}^2 \given \filter{t-1}}}.
    \end{equation*}
    Combining the results, the term $\expect{\vert (\eta_{t,i}-\etahat{t}{i})\gradFi{\w_t} g_{t,i}\vert \given \filter{t-1}}$ is bounded as follows:
    \begin{align}
        \expect{\vert(\eta_{t,i}-\etahat{t}{i})\gradFi{\w_t} g_{t,i}\vert \given \filter{t-1}} &\leq \frac{2\sigma_{i}\hat{\eta}_{t,i} \vert\gradFi{\vw_t}\vert}{\eta} \sqrt{\expect{ \eta_{t,i}^2 g_{t,i}^2 \given \filter{t-1}}} \nonumber\\
        &\leq \frac{1}{2}\hat{\eta}_{t,i} \|\gradFi{\vw_t}\|^2 + \frac{2 \hat{\eta}_{t,i} \sigma_{i}^2}{\eta^2}\expect{ \eta_{t,i}^2 g_{t,i}^2 \given \filter{t-1}} \label{eq:young} %
    \end{align}
    where we used Young's inequality in \eqref{eq:young} in the last inequality. Finally, since $\hat{\eta}_{t,i} \leq \frac{\eta}{\sigma_{i}}$, we further have  $\frac{\hat{\eta}_{t,i} \sigma_{i}^2}{\eta^2} \leq \frac{\sigma_i}{\eta}$ and this proves the inequality in \eqref{eq:bias}. 
Next, we prove~\eqref{eq:quadratic_term} in Lemma~\ref{lem:bias}. 
From the definition of the step size in \eqref{eq:stepsize_adagrad}, we have: 
\begin{equation*}
    \expect{\sumT \eta_{t,i}^2g_{t,i}^2} = \eta^2 \expect{\sumT \frac{g_{t,i}^2}{(\sqrt{b_{t-1,i}^2+g_{t,i}^2} +\delta)^2}} \leq \eta^2 \expect{\sumT \frac{g_{t,i}^2}{b_{t-1,i}^2+g_{t,i}^2+ \delta^2}}.
\end{equation*}
Using Lemma \ref{appendix:lemma:helper-log-bound-adagrad}, we can bound the summation with a log term as follows,
\begin{equation*}
        \eta^2\expect{\sumT \frac{g_{t,i}^2}{b_{t-1,i}^2+g_{t,i}^2 + \delta^2}} \leq \eta^2\expect{\log{\left(1+\frac{b_{T,i}^2}{\delta^2}\right)}} \leq \eta^2\log{\left(1+\frac{\expect{b_{T,i}^2}}{\delta^2}\right)},
\end{equation*}
where we apply Jensen's Inequality to the concave log function in the last inequality. 
Moreover, since $b_{T,i}^2=\sumT g_{t,i}^2$, by using Assumptions~\ref{assumption:unbiased} and~\ref{assumption:bounded-variance} we have 
\begin{equation*}
    \expect{b_{T,i}^2} = \sum_{t=1}^T \expect{g_{t,i}^2} \leq \sum_{t=1}^T \left(\sigma_i^2 + \expect{\nabla_i F(\w_t)^2}\right) \leq T \|\vsigma\|_{\infty}^2 + \sum_{t=1}^T \expect{\nabla_i F(\w_t)^2},
\end{equation*}
where we used the fact that $\sigma_i \leq \|\vsigma\|_{\infty}$ for any $i \in [d]$. Using the result from Lemma \ref{appendix:lemma:controlling-gradients}, for any $t \in [T]$, we further have 
\begin{equation*}
    \nabla_i F(\w_t)^2 \leq \left(\|{\gradF{\w_1}}\|_{\infty} + \eta \sqrt{\|\vL\|_\infty\|\vL\|_1}t\right)^2 \leq \left(\|{\gradF{\w_1}}\|_{\infty} + \eta \sqrt{\|\vL\|_\infty\|\vL\|_1}T\right)^2.
\end{equation*}
Combining all the inequalities above, we obtain that 
\begin{align*}
    \eta^2\expect{\sumT \frac{g_{t,i}^2}{b_{t-1,i}^2+g_{t,i}^2 + \delta^2}}\leq \eta^2\log{\left(1+\frac{T\|\vsigma\|^2_{\infty}}{\delta^2}+ \frac{T(\|{\gradF{\w_1}}\|_{\infty} + \eta \sqrt{\|\vL\|_\infty\|\vL\|_1}T)^2}{\delta^2}\right)}
\end{align*}
Hence, we have proved the bound in \eqref{eq:quadratic_term} of Lemma \ref{lem:bias}. This completes the proof of the results in Lemma \ref{lem:bias}.
\section{Lower Bound Results}\label{appen:lower_bound}

\subsection{Proof of \texorpdfstring{Theorem~\ref{thm:lower_bound_l2}}{Theorem 2.1}}
\label{appen:lower_bd_l2}

To finish the proof of Theorem~\ref{thm:lower_bound_l2}, it remains to show that the function $p$ can be constructed satisfying those three conditions. This is achieved by applying the following lemma. 

\begin{lemma}\label{lem:1d_interpolation}
For any given $\epsilon \in (0, \sqrt{2}]$, let $N$ be a positive integer such that $N \leq \frac{1}{\epsilon^2} + \frac{1}{2}$. Then for any $N$ points $\{x_t\}_{t=1}^N$ in $\reals$, there exists a function $p: \reals \rightarrow \reals$ of one dimension such that: (i) its gradient is $1$-Lipschitz; (ii) $p(x_1) - \inf p \leq 1$; (iii) $p'(x_t) = -\epsilon$ for any $t \in [N]$.   
\end{lemma}

Specifically, since $T \leq \frac{ \|\vL\|_{\infty} \Delta_f}{\epsilon^2} = \frac{1}{\tilde{\epsilon}^2}$ with $\tilde{\epsilon} = \frac{\epsilon}{\sqrt{\|\vL\|_{\infty} \Delta_f}}$, the existence of $p$ follows from applying Lemma~\ref{lem:1d_interpolation} to the $T$ points $\{\sqrt{\nicefrac{L_1}{\Delta_f}} x^{(1)}_t\}_{t=1}^T$. 

\begin{proof}[Proof of Lemma~\ref{lem:1d_interpolation}]
    We divide the proof into two cases. 

    \myalert{Case I:} The point $x_1$ is the largest among the $N$ points $\{x_t\}_{t=1}^N$, i.e., $x_t \leq x_1$ for any $t \in [N]$. In this case, we define the function $p:\reals \rightarrow \reals$ as follows; 
    \begin{equation*}
        p(x) = 
        \begin{cases}
            -\epsilon (x - x_1), & x \in (-\infty, x_1]; \\
            \frac{1}{2}(x-x_1)^2 - \epsilon(x-x_1), & x \in (x_1,+\infty).
        \end{cases}
    \end{equation*} 
    By direct calculation, we have $p'(x) = -\epsilon$ when $x \in (-\infty, x_1]$ and $p'(x) = x-x_1-\epsilon$ when $x\in (x_1,+\infty)$. Hence, it is straightforward to verify that $p'$ is 1-Lipschitz. Moreover, the minimum of $p$ is achieved at $x = x_1 + \epsilon$, with $\inf p = - \frac{1}{2}\epsilon^2$. Thus, we have $p(x_1) - \inf p = \frac{1}{2}\epsilon^2 \leq 1$ since $\epsilon \leq \sqrt{2}$. Finally, since $p'(x) = -\epsilon$ for all $x \leq x_1$, we conclude that $p'(x_t) = -\epsilon$ for all $t \in [N]$. Hence, the function $p$ satisfies all the three conditions in Lemma~\ref{lem:1d_interpolation}. 

    \myalert{Case II:} There are $k$ points to the right of $x_1$ among the $N$ points $\{x_t\}_{t=1}^N$, where $1\leq k \leq N-1$. Since the statement in Lemma~\ref{lem:1d_interpolation} is independent of the ordering of $\{x_2,\dots,x_N\}$, without loss of generality, we may assume that these $k$ points are $x_2,\dots,x_{k+1}$.

    We begin by defining an auxiliary function $\phi_{a,b,\epsilon}(x)$ over a given interval $[a,b]$, which is continuous, piecewise quadratic and will serve as the basic building block of our worst-case function. Specifically,  
    \begin{equation}\label{eq:def_phi}
        \phi_{a,b,\epsilon}(x) = 
        \begin{cases}
            \frac{1}{2}(x-a)^2 - \epsilon (x-a), &  x \in [a  ,\frac{a+b}{2}]; \\
            -\frac{1}{2}(x-b)^2 - \epsilon(x-b) + \frac{(b-a)^2}{4} - (b-a)\epsilon, & x \in (\frac{a+b}{2}, b]. 
        \end{cases}
    \end{equation}
    Direct computation shows that $ \phi'_{a,b,\epsilon}(x) = x-a - \epsilon$ for $a \leq x \leq \frac{a+b}{2}$ and $\phi'_{a,b,\epsilon}(x) = -x + b -\epsilon$ for $ \frac{a+b}{2} < x \leq b$. Therefore, it is straightforward to verify that: 
    \begin{itemize} 
        \item $\phi_{a,b,\epsilon}(a) = 0$ and $\phi_{a,b,\epsilon}(b) = \frac{(b-a)^2}{4} -(b-a) \epsilon$; 
        \item $\phi'_{a,b,\epsilon}$ is $1$-Lipschitz and $\phi'_{a,b,\epsilon}(a) = \phi'_{a,b,\epsilon}(b) = -\epsilon$; 
        \item $\inf_{x \in [a,b]}\phi_{a,b,\epsilon}(x) = \min\{-\frac{1}{2}\epsilon^2, \phi_{a,b,\epsilon}(b)\}$. 
    \end{itemize}

    Having defined the function $\phi_{a,b,\epsilon}$, we now construct the function $p: \reals \rightarrow \reals$ as follows: 
    \begin{equation}\label{eq:def_p}
        p(x) = 
        \begin{cases}
            -\epsilon(x-x_1), &  x \in (-\infty, x_1]; \\
            \phi_{x_t,x_{t+1},\epsilon}(x) + p_t, & x \in (x_t,x_{t+1}] \;\; (1\leq t \leq k); \\
            \frac{1}{2}(x-x_{k+1})^2 - \epsilon(x-x_{k+1}) + p_{k+1}, & x \in (x_{k+1}, +\infty).
        \end{cases}
    \end{equation}
    Note that $p(x_t) = p_t$ and the values $\{p_t\}_{t=1}^{k+1}$ are chosen such that the function $p$ is continuous. 
    Specifically, this requires that $\phi_{x_t,x_{t+1},\epsilon}(x_{t+1}) + p_t = p_{t+1}$, By induction, this condition leads to  
    \begin{equation}\label{eq:values_of_p}
        p_1 = 0, \; p_t = \sum_{i=1}^{t-1} \left(\frac{1}{4}(x_{i+1}-x_i)^2 - (x_{i+1}-x_i)\epsilon \right). 
    \end{equation} 

    Now we verify that $p$ satisfies all the three conditions in Lemma~\ref{lem:1d_interpolation}.  First, since $p'$ is 1-Lipschitz on each interval and $p'$ is continuous, it follows that $p'$ is 1-Lipschitz over the entire real line $\reals$. Moreover, by construction, it is straightforward to verify that $p'(x_t) = -\epsilon$ for all $t \in [k+1]$, and $p'(x) = - \epsilon$ for all $x \leq x_1$. Combining these two facts, we obtain that the third condition in Lemma~\ref{lem:1d_interpolation} is also satisfied.  
    To verify the second condition, note that $p(x_1) = 0$. Moreover, from the definition of $p$ in \eqref{eq:def_p} and the properties of $\phi_{a,b,\epsilon}$, we have 
    \begin{equation*}
        p(x) \geq \begin{cases}
            0, & x\in (-\infty, x_1]; \\
            \min\{p_t - \frac{1}{2}\epsilon^2, p_{t+1}\}, & x \in (x_t, x_{t+1}]\;(1\leq t \leq k); \\
            p_{k+1} - \frac{1}{2}\epsilon^2, & x \in (x_{k+1},+\infty).
        \end{cases}
    \end{equation*}
    Hence, this shows that 
    \begin{equation}\label{eq:inf_p_lb}
        \inf p \geq \min_{t\in [k+1]} \left\{ p_t - \frac{1}{2}\epsilon^2 \right\} = \min_{t\in [k+1]}p_t  - \frac{1}{2}\epsilon^2.
    \end{equation} 
    Next, we provide a lower bound for $p_t$. By using Jensen's inequality, we have 
    \begin{align*}
        p_t = \sum_{i=1}^{t-1} \left(\frac{1}{4}(x_{i+1}-x_i)^2 - (x_{i+1}-x_i)\epsilon \right) &= \frac{1}{4} \sum_{i=1}^{t-1}(x_{i+1}-x_i)^2 - \epsilon(x_t - x_1) \\
        & \geq \frac{1}{4(t-1)} \left(\sum_{i=1}^{t-1} x_{i+1}-x_i\right)^2 - \epsilon (x_t - x_1) \\
        & = \frac{1}{4(t-1)}(x_t-x_1)^2 - \epsilon (x_t - x_1) \\
        & \geq -(t-1)\epsilon^2. 
    \end{align*}
    Since $t \leq k+1 \leq N$, it further follows from \eqref{eq:inf_p_lb} that $\inf p \geq -(N-1)\epsilon^2 - \frac{1}{2} \epsilon^2 = (-N+\frac{1}{2})\epsilon^2$. Finally, given that $N \leq \frac{1}{\epsilon^2}+\frac{1}{2}$ by assumption, we have $p(x_1) - \inf p \leq (N - \frac{1}{2})\epsilon^2 \leq 1$. Thus, we conclude that the function~$p$ satisfies all the conditions in Lemma~\ref{lem:1d_interpolation}. 
\end{proof}

\subsection{Proof of \texorpdfstring{Theorem~\ref{thm:lower_bnd_adagrad}}{Theorem 3.2}}
\label{appen:lb_adagrad}

We first present the following lemma, which will be used to construct the worst-case function. 
\begin{lemma}\label{lem:1d_interpolation_adagrad}
For any positive integer $N$, suppose that $\epsilon$ satisfies 
\begin{equation}\label{eq:condition_on_epsilon}
\epsilon \leq \min\left\{\frac{\eta \log N}{8\sqrt{N}} + \frac{1}{4\eta \sqrt{N}}, 1 \right\}. 
\end{equation}
Let $x_1 = 0$ and $x_t = \eta \sum_{s=1}^{t-1} \frac{1}{\sqrt{s}}$ for any $2 \leq t\leq N$. 
Then there exists a function $p: \reals \rightarrow \reals$ of one dimension such that: (i) its gradient is $1$-Lipschitz; (ii) $p(x_1) - \inf p \leq 1$; (iii) $p'(x_t) = -\epsilon$ for any $t \in [N]$. 
\end{lemma}

\begin{proof}
    We follow a similar approach as in the proof of Lemma~\ref{lem:1d_interpolation}. Specifically, we construct the function $p$ in a similar form as \eqref{eq:def_p} based on the auxiliary function $\phi_{a,b,\epsilon}(x)$ defined in \eqref{eq:def_phi}:
    \begin{equation*}
        p(x) = 
        \begin{cases}
            -\epsilon(x-x_1), &  x \in (-\infty, x_1]; \\
            \phi_{x_t,x_{t+1},\epsilon}(x) + p_t, & x \in (x_t,x_{t+1}] \;\; (1\leq t \leq N-1); \\
            \frac{1}{2}(x-x_{N})^2 - \epsilon(x-x_{N}) + p_{N}, & x \in (x_{N}, +\infty),
        \end{cases}
    \end{equation*} 
    where the values $\{p_t\}_{t=1}^N$ are chosen to ensure that the function $p$ is continuous. Hence, as in \eqref{eq:values_of_p}, we have $p_1 = 0$ and 
    \begin{equation*}
        p_t = \sum_{s=1}^{t-1} \left(\frac{1}{4}(x_{s+1}-x_s)^2 - (x_{s+1}-x_s)\epsilon \right) 
        = \sum_{s=1}^{t-1} \left(\frac{\eta^2}{4 s} - \frac{\eta \epsilon}{\sqrt{s}} \right), \quad \forall t \geq 2.
    \end{equation*}
    Using the same arguments as in Lemma~\ref{lem:1d_interpolation}, we can verify that $p$ has $1$-Lipschitz gradient and $p'(x_t) = -\epsilon$ for all $t \in [N]$. Hence, it remains to show that $p(x_1) - \inf p \leq 1$. 
    
    To begin with, recall from \eqref{eq:inf_p_lb} that $\inf p \geq \min_{t \in [N]} p_t - \frac{1}{2} \epsilon^2$, and hence our goal is to lower bound $p_t$. Moreover, note that  $p_{t+1} - p_t = \frac{\eta^2}{4t} - \frac{\eta \epsilon}{\sqrt{t}}$, which implies that $p_t$ is monotonically increasing when $t \leq \frac{\eta^2}{16\epsilon^2}$ and monotonically decreasing when $t > \frac{\eta^2}{16\epsilon^2}$. It follows from this observation that $\min_{t \in [N]} p_t = \min\{p_1,p_N\}$. To lower bound $p_N$, we use the elementary inequality that $\sum_{s=1}^{N-1} \frac{1}{s} \geq \log N $ and $\sum_{s=1}^{N-1} \frac{1}{\sqrt{s}} \leq 2\sqrt{N-1}-1 \leq 2\sqrt{N}$. This leads to 
    \begin{equation*}
        p_N = \frac{\eta^2}{4} \sum_{s=1}^{N-1} \frac{1}{s} - \eta\epsilon \sum_{s=1}^{N-1} \frac{1}{\sqrt{s}} \geq \frac{\eta^2}{4}\log N - 2\eta \epsilon \sqrt{N}. 
    \end{equation*}
    Since $p_1 = 0$, this implies that $\inf p \geq \min\{0,\frac{\eta^2}{4}\log N - 2\eta \epsilon \sqrt{N}\} - \frac{1}{2}\epsilon^2$ and consequently 
    \begin{equation*}
        p(x_1) - \inf p \leq \max \left\{\frac{1}{2}\epsilon^2, 2\eta \epsilon \sqrt{N} -\frac{\eta^2}{4}\log N +  \frac{1}{2}\epsilon^2\right\}. 
    \end{equation*}
    Using the condition in \eqref{eq:condition_on_epsilon},  we have $\frac{1}{2}\epsilon^2 \leq \frac{1}{2} \leq 1$ and
    \begin{align*}
        2\eta \epsilon \sqrt{N} -\frac{\eta^2}{4}\log N +  \frac{1}{2}\epsilon^2 &\leq 2\eta \epsilon \sqrt{N} -\frac{\eta^2}{4}\log N +  \frac{1}{2} \\
        &\leq 2\eta \sqrt{N}\left(\frac{\eta \log N}{8\sqrt{N}} + \frac{1}{4\eta \sqrt{N}} \right) -\frac{\eta^2}{4}\log N + \frac{1}{2} = 1.
    \end{align*}
    Hence, we conclude that $p(x_1) - \inf p \leq 1$. 
\end{proof}

Built on Lemma~\ref{lem:1d_interpolation_adagrad}, we proceed to prove a complexity lower bound for \ref{eq:adagrad-coord-update} in one dimension. 
\begin{lemma}\label{lem:adagrad_complexity_lb}
    Consider running \ref{eq:adagrad-coord-update} on a one-dimensional smooth function $p$ with the scaling parameter $\eta$. For any $L>0$ and $\Delta > 0$, there exists a function $p: \reals \rightarrow \reals$ and a corresponding stochastic gradient oracle such that: (i) $p$ has $L$-Lipschitz gradients and $p(x_1) - \inf p \leq \Delta$; (ii) the stochastic gradient $g_t$ is unbiased and has a bounded variance of $\sigma^2$; (iii) Given $\epsilon$ such that $\epsilon < \frac{\sqrt{{L\Delta}}}{16\sqrt{2}}$, if 
    $T \leq \frac{L \Delta }{256\epsilon^2}\left(1+\frac{\sigma^2}{4\epsilon^2}\right)\log \frac{L \Delta}{128\epsilon^2}$,
    then we have $\E \left[ \min_{1\leq t \leq T} |p'(x_t)| \right] \geq \epsilon$. 
\end{lemma}

\begin{proof}
    We set $x_1 = 0$. To begin with,  we can assume without loss of generality that $L = 1$ and $\Delta = 1$. This follows from Lemma~1 in \cite{chewi2023complexity}, which demonstrates that if a function $p: \mathbb{R} \rightarrow \mathbb{R}$ has a 1-Lipschitz gradient and satisfies $p(0) - \inf p \leq 1$, then the rescaled function $\tilde{p}(x) = \Delta p\left(\sqrt{\frac{L}{\Delta}}x\right)$ has an $L$-Lipschitz gradient and satisfies $\tilde{p}(0) - \inf \tilde{p} \leq \Delta$. Furthermore, finding a point $\hat{x}$ such that $|\tilde{p}'(\hat{x})| \leq \epsilon$ is equivalent to finding a point $\hat{x}$ such that $|p'(\hat{x})| \leq \frac{\epsilon}{\sqrt{L\Delta}}$.

    Now define $ N = \frac{1}{128\epsilon^2} \log \frac{1}{128\epsilon^2}$ and 
    we first verify that the condition in \eqref{eq:condition_on_epsilon} is satisfied with $2\epsilon$. Specifically, we will prove that $ 2{\epsilon} \leq \sqrt{\frac{\log N}{32 N}}$, which immediately implies \eqref{eq:condition_on_epsilon} as $\frac{\eta \log N}{8\sqrt{N}} + \frac{1}{4\eta \sqrt{N}} \geq \sqrt{\frac{\log N}{32 N}}$.  
    By direct computation, we have  
    \begin{equation*}
        \sqrt{\frac{\log N}{32 N}} = 2\epsilon \sqrt{\frac{\log N}{\log \frac{1}{128\epsilon^2}}} = 2\epsilon \sqrt{\frac{\log \frac{1}{128\epsilon^2} + \log \log \frac{1}{128\epsilon^2}}{\log \frac{1}{128\epsilon^2}}} > \epsilon,  
    \end{equation*}
    where we used the fact that $\epsilon < \frac{1}{16\sqrt{2}}\Leftrightarrow \frac{1}{128\epsilon^2} > 4 \Rightarrow \log\log\frac{1}{128\epsilon^2} >1$. Define $q_1 = 0$ and $q_t = \eta \sum_{s=1}^{t-1} \frac{1}{\sqrt{s}}$ for any $2\leq t \leq N$. According to Lemma~\ref{lem:1d_interpolation_adagrad}, there exists a function $p: \reals \rightarrow \reals$ such that (i) its gradient is 1-Lipschitz; (ii) $p(x_1) - \inf p \leq 1$; (iii) $p'(x_t) = -{2\epsilon}$ for any $t\in [N]$. 

    Now consider running \ref{eq:adagrad-coord-update} on the one-dimensional function $p(x)$ with the stochastic gradient oracle given by 
    \begin{equation}\label{eq:stochastic_gradient_adagrad}
        \Pr(g_t = 0 \given x_t) = \frac{\sigma^2}{\sigma^2 + 4\epsilon^2} \quad \text{and} \quad \Pr \Bigl(g_t = \bigl(1+\frac{\sigma^2}{4\epsilon^2}\bigr)p'(x_t) \given x_t\Bigr) = \frac{4\epsilon^2}{\sigma^2 + 4\epsilon^2}.
    \end{equation}
    It is straightforward to verify that $\E [g_t \given x_t] = p'(x_t)$, i.e., the stochastic gradient $g_t$ is unbiased. Our goal is to show that, if $T \leq \frac{1}{256\epsilon^2}\left(1+\frac{\sigma^2}{4\epsilon^2}\right)\log \frac{1}{128\epsilon^2} = \frac{1}{2}(1+\frac{\sigma^2}{4\epsilon^2})N$, then we have $ |p'(x_t)| = 2\epsilon$ for all $t \in [T]$ with probability at least $\frac{1}{2}$. If this is the case, we can also verify that the stochastic gradient $g_t$ has variance bounded by $\sigma^2$, and thus our construction satisfies all the required conditions. 
    
    As mentioned in the proof sketch, our key observation is the characterization of the dynamic of~\ref{eq:adagrad-coord-update} in \eqref{eq:dyanmic_M}. Specifically, recall that $M_t$ denote the number of times the stochastic gradient is non-zero by time $t$ and $M_0 = 0$. By definition, we have $\expect{M_T} = T \cdot \frac{4\epsilon^2}{\delta^2+4\epsilon^2}$, and thus it follows from Markov's inequality that $\Pr(M_T > 2\E[M_T]) \leq \frac{1}{2}$. This implies that, with probability at least $\frac{1}{2}$, we have $M_T \leq 2T \cdot \frac{4\epsilon^2}{\delta^2+4\epsilon^2} \leq N$. Moreover, conditioned on the event that $M_T \leq N$, we can use induction to prove that $x_t = \eta \sum_{s=1}^{M_{t-1}} \frac{1}{\sqrt{s}}$ and $p'(x_t) = -2\epsilon$ using the property of the constructed function $p$. Indeed, this holds for $t = 1$ and now suppose this holds for $t = s$. By the definition in \eqref{eq:stochastic_gradient_adagrad}, we have either $g_s = 0$ or $g_s = -2\epsilon(1+\frac{\sigma^2}{4\epsilon^2}) = -2\epsilon - \frac{\sigma^2}{2\epsilon}$. In the former case, $M_s = M_{s-1}$ and $x_{s+1} = x_s$. In the latter case, $M_s = M_{s-1} +1$ and $x_{s+1} = x_s + \frac{\eta}{M_s} = \sum_{j=1}^{M_{s-1}} \frac{\eta}{\sqrt{j}} + \frac{\eta}{M_s} = \sum_{j=1}^{M_s} \frac{\eta}{\sqrt{j}}$. Moreover, since $M_s \leq M_T \leq N$, we have $p'(x_{s+1}) = -2\epsilon$. Hence, in both cases, the statement holds for $t = s+1$.
    Finally, using the law of total probability, we can lower bound  
    \begin{equation*}
        \E \left[ \min_{1\leq t \leq T} |p'(x_t)| \right] \geq \frac{1}{2} \E \left[ \min_{1\leq t \leq T} |p'(x_t)|  \given M_T \leq N\right] = \frac{1}{2} \cdot 2\epsilon. 
    \end{equation*}
    This completes the proof. 
\end{proof}

Lemma~\ref{lem:adagrad_complexity_lb} states the complexity lower bound for \ref{eq:adagrad-coord-update} for a one-dimensional function. This can be equivalently converted into a lower bound on the convergence rate, as stated in the following corollary. 
\begin{corollary}\label{lem:1d_adagrad_lb}
    Consider running \ref{eq:adagrad-coord-update} on a one-dimensional smooth function $p$ with a scaling parameter $\eta$. Then there exists a function $p_{\Delta, L, \sigma, T}: \mathbb{R} \rightarrow \mathbb{R}$ such that $p$ has $L$-Lipschitz gradient, $p(x_1) - \inf p \leq \Delta$, the stochastic gradient $g_t$ is unbiased and has a bounded variance of~$\sigma^2$, and 
    \begin{equation}\label{eq:adagrad_1d}
\mathbb{E}\left[\min_{1 \leq t \leq T}  |p_{\Delta, L, \sigma, T}'({x}_t)| \right] \geq  \max \left\{\frac{1}{32}\sqrt{\frac{L \Delta \log (2T+1)}{ T}}, \frac{1}{16}\left(\frac{\sigma^2 L \Delta}{T} \log \left(1+ \frac{TL\Delta}{8\sigma^2}\right)\right)^{\nicefrac{1}{4}} \right\} .
\end{equation}
\end{corollary}
\begin{proof}
    For a given number of iterations $T$, we would like to find the largest $\epsilon$ that satisfies the condition in Lemma~\ref{lem:adagrad_complexity_lb}, which serves as a valid lower bound. We will rely on the following helper lemma. 
    \begin{lemma}
        Suppose $x \geq 0$. Then for $y \geq \frac{2x}{\log (x+1)}$, we have $x \leq y \log y$. 
    \end{lemma}
    A sufficient condition for the condition on $T$ in Lemma~\ref{lem:adagrad_complexity_lb} to satisfy is 
    \begin{equation*}
        2T \leq \frac{L \Delta }{128\epsilon^2}\log \frac{L \Delta}{128\epsilon^2} \; \Leftarrow\; \frac{L \Delta }{128\epsilon^2} \geq \frac{4T}{\log (2T+1)} \; \Leftrightarrow\; \epsilon \leq \sqrt{\frac{L \Delta \log (2T+1)}{512 T}}.
    \end{equation*}
    Moreover, since $\sqrt{\frac{L \Delta \log (2T+1)}{1024 T}} \leq \sqrt{\frac{2L \Delta T}{1024 T}} = \sqrt{\frac{L\Delta}{512}}$, both conditions in Lemma~\ref{lem:adagrad_complexity_lb} are satisfied by choosing $\epsilon = \sqrt{\frac{L \Delta \log (2T+1)}{1024 T}} \!=\! \frac{1}{32}\sqrt{\frac{L \Delta \log (2T+1)}{ T}}$. 
    Similarly, another sufficient condition is 
    \begin{align*}
        T \leq \frac{\sigma^2 L \Delta}{1024 \epsilon^4} \log \frac{L\Delta}{128\epsilon^2} \; &\Leftrightarrow \;  \frac{T L \Delta}{8\sigma^2} \leq \frac{L^2\Delta^2}{2^{14}\epsilon^4} \log \frac{L^2\Delta^2}{2^{14} \epsilon^4} \\
        &\Leftarrow \; \frac{L^2\Delta^2}{2^{14}\epsilon^4} \geq  \frac{TL \Delta}{4\sigma^2} \left(\log \left(1+\frac{TL\Delta}{8\sigma^2}\right)\right)^{-1} \\
        &\Leftrightarrow  \epsilon \leq \left(\frac{\sigma^2 L \Delta}{2^{14}T} \log \left(1+ \frac{TL\Delta}{8\sigma^2}\right)\right)^{\nicefrac{1}{4}}. 
    \end{align*}
    Similarly, we can choose $\epsilon = \frac{1}{16}\left(\frac{\sigma^2 L \Delta}{T} \log \left(1+ \frac{TL\Delta}{8\sigma^2}\right)\right)^{\nicefrac{1}{4}}$ to satisfy both conditions. 
    Hence, we conclude that the lower bound in the corollary is satisfied. 
\end{proof}

Now we are ready to prove Theorem~\ref{thm:lower_bnd_adagrad}. As mentioned in the proof sketch, we choose the function $f : \reals^d \rightarrow \reals$ of the form $\sum_{i=1}^d p_{\Delta_i, L_i, \sigma_i,T}(x^{(i)})$, where $x^{(i)}$ denotes the $i$-th coordinate of $\vx$ and $\Delta_i \geq 0$ with $\sum_{i=1}^d \Delta_i = \Delta_f$. By our construction, it is straightforward to verify that the function $f$ satisfies both conditions in (i) and (ii). Thus, by applying Corollary~\ref{lem:1d_adagrad_lb} to each coordinate, we derive that   
\begin{align*}
    \mathbb{E}\left[\min_{1 \leq t \leq T} \Vert \nabla f(\mathbf{x}_t)\Vert_1\right] &\geq \sum_{t=1}^T \E\Bigl[\min_{1\leq t \leq T} |p'_{\Delta_i, L_i, \sigma_i,T}(x^{(i)})|\Bigr] \\
    &\geq \sum_{i=1}^d C \max \Bigl\{\sqrt{\frac{L_i \Delta_i \log T}{T}}, \left(\frac{\sigma_i^2 L_i \Delta_i}{T} \log\left(1+\frac{TL_i\Delta_i}{\sigma_i^2}\right)\right)^{\nicefrac{1}{4}} \Bigr\},
\end{align*}
where $C$ is an absolute constant. First, consider choosing $\Delta_i = \frac{L_i \Delta_f}{\|\vL\|_1}$ for all $i \in [d]$. It follows that 
\begin{equation*}
    \mathbb{E}\left[\min_{1 \leq t \leq T} \Vert \nabla f(\mathbf{x}_t)\Vert_1\right] \geq \sum_{i=1}^d C L_i\sqrt{\frac{ \Delta_f \log T}{\|\vL\|_1 T}} = C \sqrt{ \frac{\|\vL\|_1\Delta_f \log T}{T}}. 
\end{equation*}
Second, consider choosing $\Delta_i = \frac{\sigma_i^{\nicefrac{2}{3}}L_i^{\nicefrac{1}{3}}}{\sum_{i=1}^d \sigma_i^{\nicefrac{2}{3}}L_i^{\nicefrac{1}{3}}} \Delta_f$ for $i \in [d]$. Then we have 
\begin{align*}
    \mathbb{E}\left[\min_{1 \leq t \leq T} \Vert \nabla f(\mathbf{x}_t)\Vert_1\right] &\geq \sum_{i=1}^d C \left(\frac{\Delta_f \sigma_i^{\nicefrac{8}{3}} L_i^{\nicefrac{4}{3}}}{\sum_{i=1}^d \sigma_i^{\nicefrac{2}{3}}L_i^{\nicefrac{1}{3}}T} \log\left(1+\frac{TL_i^{\nicefrac{4}{3}}\Delta_f}{\sigma_i^{\nicefrac{4}{3}}\sum_{i=1}^d \sigma_i^{\nicefrac{2}{3}}L_i^{\nicefrac{1}{3}}}\right)\right)^{\nicefrac{1}{4}} \\
    & = C\Bigl( \frac{(\sum_{i=1}^d \sigma^{\nicefrac{2}{3}}_i {L_i}^{\nicefrac{1}{3}})^{3}\Delta_f}{T} \log \Bigl(1+\rho T \Bigr) \Bigr)^{\frac{1}{4}},
\end{align*}
where $\rho  = \frac{L_{\min}^{\nicefrac{4}{3}} \Delta_f}{\|\vsigma\|_{\infty}^{\nicefrac{4}{3}} \sum_{i=1}^d \sigma_i^{\nicefrac{2}{3}}L_i^{\nicefrac{1}{3}}}$. This completes the proof.  

\subsection{Proof of \texorpdfstring{Theorem~\ref{thm:lower_bound_l1}}{Theorem 3.3}}\label{appen:lower_bound_l1}

We follow a similar proof strategy as in Theorem~\ref{thm:lower_bound_l2} and use the resisting oracle argument.   Consider any deterministic method $\mathcal{A}$ that has access only to a first-order oracle and let $T$ be an integer such that $T \leq \frac{\|\vL\|_1 \Delta_f}{\epsilon^2}$. We adversarially construct a function $f$ that satisfies the stated conditions and ensures that $ \nabla f(\vx_t) = \frac{1}{\|\vL\|_1}[L_1\epsilon, L_2 \epsilon, \dots, L_d \epsilon] \in \reals^d$ for any $t\in [T]$, where $\{\vx_t\}_{t=1}^T$ are the queries made by $\mathcal{A}$. Note that $\|\nabla f(\vx_t)\|_1 = \epsilon$ by this construction. As shown in the proof of Theorem~\ref{thm:lower_bound_l2}, thanks to the deterministic nature of $\mathcal{A}$, we can simulate the algorithm using the known first-order oracle responses above and construct our function $f$ based on the queries $\{\vx_t\}_{t=1}^T$. 

Specifically, we construct the adversarial function $f$ of the form 
$$f(\vx) = \sum_{i=1}^d   \frac{L_i \Delta_f}{\|\mathbf{L}\|_1}p_i\left(\sqrt{\frac{\|\mathbf{L}\|_1}{\Delta_f}}x^{(i)}\right),$$
where $x^{(i)}$ denotes the $i$-th coordinate of $\vx$ and the one-dimensional functions $p_i : \reals \rightarrow \reals$ for $i \in [d]$ will be determined as follows. Fix a coordinate $i \in [d]$, let  $\{x_t^{(i)}\}_{t=1}^T$ be the $i$-th coordinate of the queries $\{\vx_t\}_{t=1}^T$. Since $T \leq \frac{\|\vL\|_1 \Delta_f}{\epsilon^2} = \frac{1}{\tilde{\epsilon}^2}$ with $\tilde{\epsilon} = \frac{\epsilon}{\sqrt{\|\vL\|_1 \Delta_f}}$, by invoking Lemma~\ref{lem:1d_interpolation}, there exists a function $p_i$ satisfying the following conditions: (i) its gradient $p_i'$ is $1$-Lipschitz; (ii) $p_i(\sqrt{\frac{\|\mathbf{L}\|_1}{\Delta_f}}x_1^{(i)}) - \inf p_i \leq 1$; (iii) $p_i'(\sqrt{\frac{\|\mathbf{L}\|_1}{\Delta_f}}x_t^{(i)}) = \tilde{\epsilon} = \frac{\epsilon}{\sqrt{\|\vL\|_1 \Delta_f}}$ for any $t \in [T]$. By direct computation, we can verify that $f$ satisfies Assumption~\ref{assumption:Lsmooth} and $f(\vx_1) - \inf f \leq \sum_{i=1}^d \frac{L_i \Delta_f}{\|\vL\|_1} = \Delta_f$. Moreover, the $i$-th coordinate of $\nabla f(\vx_t)$ is given by 
\begin{equation*}
    \frac{L_i \Delta_f}{\|\mathbf{L}\|_1}\sqrt{\frac{\|\mathbf{L}\|_1}{\Delta_f}} p'_i\left(\sqrt{\frac{\|\mathbf{L}\|_1}{\Delta_f}}x^{(i)}\right) = L_i \sqrt{\frac{\Delta_f}{\|\mathbf{L}\|_1}}\frac{\epsilon}{\sqrt{\|\vL\|_1 \Delta_f}} = \frac{L_i \epsilon}{\|\mathbf{L}\|_1}.
\end{equation*}
Therefore, the constructed function $f$ is indeed consistent with our resisting oracle. In particular, this implies that after $\frac{\|\vL\|_1 \Delta_f}{\epsilon^2}$ gradient queries, Algorithm $\mathcal{A}$ fails to find a point $\hat{\vx}$ with $\|\nabla f(\hat{\vx})\|_1 < \epsilon$. This completes the proof.

\subsection{Proof of \texorpdfstring{Theorem~\ref{thm:lower_bound_SGD}}{Theorem 4.1}}\label{appen:lower_bound_SGD}
 We first present a lower bound result for SGD in the one-dimensional setting. Our proof is partially inspired by \cite[Proposition 4]{abbaszadehpeivasti2022exact}, which studies the convergence rate of gradient descent in the noiseless setting.  

\begin{lemma}\label{lem:SGD_1d}
    Consider running SGD $x_{t+1} = x_t - \eta g_t$ on a one-dimensional smooth function $p$ with a constant step size $\eta$. For any $L>0$ and $\Delta > 0$,  there exists a function $p: \mathbb{R} \rightarrow \mathbb{R}$ and a corresponding stochastic gradient oracle such that (i) $p$ has $L$-Lipschitz gradients and $p(x_1) - \inf p \leq \Delta$; (ii) the stochastic gradient $g_t$ is unbiased and has a bounded variance of~$\sigma^2$; (iii) it holds that 
    \begin{equation}\label{eq:SGD_1d}
        \mathbb{E}\left[\min_{1 \leq t \leq T}  |p'({x}_t)| \right] \geq 
        \begin{cases}
            \sqrt{2L \Delta}, & \text{if } \eta \geq \frac{2}{L}; \\
            \max\left\{ \frac{1}{2}\sqrt{\frac{\Delta}{2\eta T + \frac{1}{2L}}}, \min \left\{\sigma \sqrt{\frac{L \eta}{2}},\sqrt{2L \Delta} \right\}\right\} , & \text{otherwise.}
        \end{cases}        
    \end{equation} 
\end{lemma}
\begin{proof}
    We first consider the simple case where $\eta \geq \frac{2}{L}$. Let 
    $$p(x) = 
    \begin{cases}
        \frac{L}{2}x^2, & |x| \leq \sqrt{\frac{2\Delta}{L}}; \\
        \sqrt{2L\Delta} |x|  - \Delta, & |x| > \sqrt{\frac{2\Delta}{L}},
    \end{cases}
    $$
    and set the stochastic gradient oracle as the exact gradient oracle. Moreover, we initialize SGD with $x_1 = -\sqrt{\frac{2\Delta}{L}}$. It is easy to verify that both conditions (i) and (ii) are satisfied. Moreover, we can prove by induction that the iterates $x_t$ alternate between $x_1 = -\sqrt{\frac{2\Delta}{L}}$ and $x_2 = -\sqrt{\frac{2\Delta}{L}} + \eta \sqrt{2L\Delta}$. Indeed, following the update rule, we have $x_2 = x_1 -\eta p'(x_1) = -\sqrt{\frac{2\Delta}{L}} + \eta \sqrt{2L\Delta}$. Since $\eta \geq \frac{2}{L}$, it holds that $|x_2| \geq \frac{2}{L}\sqrt{2L\Delta}-\sqrt{\frac{2\Delta}{L}} = \sqrt{\frac{2\Delta}{L}}$ and hence $p'(x_2) = \sqrt{2L\Delta}$. Therefore, $x_3 = x_2 - \eta p'(x_2) = x_1$ and the repetition continues. This shows that $|p'(x_t)| = \sqrt{2L\Delta}$ for all $t \geq 1$.  

    For the case where $\eta < \frac{2}{L}$, we prove the lower bound by considering the following two constructions. 
    \begin{enumerate}[(i)]
        \item \textbf{Construction I}: 
        Set $\epsilon = \min\{\sigma \sqrt{\frac{L \eta}{2}}, \sqrt{2L \Delta}\}$ and without loss of generality, we initialize SGD with  $x_1 = \frac{\epsilon}{L}$.
        Consider the function 
        \begin{equation}\label{eq:def_p_1}
            p(x) = \begin{cases}
                \frac{L}{2}x^2, & |x| \leq \frac{\epsilon}{L}; \\
                \epsilon |x| - \frac{1}{2L}\epsilon^2, & |x| >  \frac{\epsilon}{L},
            \end{cases}
        \end{equation}
        with the stochastic gradient oracle $g(x)$ given by 
\begin{equation}\label{eq:stochastic_gradient_SGD}
       \Pr(g(x) = 0) = \frac{\sigma^2}{\sigma^2 + \epsilon^2} \quad \text{and} \quad  \Pr\left(g(x) = \left(1+ \frac{\sigma^2}{\epsilon^2}\right)p'(x)\right) = \frac{\epsilon^2}{\sigma^2 + \epsilon^2}. 
    \end{equation}
     It is straightforward to verify that $p(x)$ has $L$-Lipschitz gradients and $p(x_1) - \inf p \leq \frac{\epsilon^2}{2L} \leq \Delta$. Moreover, we can compute that 
        \begin{align*}
            \expect{g(x)} &= \frac{\epsilon^2}{\sigma^2 + \epsilon^2} \left(1+ \frac{\sigma^2}{\epsilon^2}\right)p'(x) = p'(x), \\
            \expect{\left(g(x)- p'(x)\right)^2} &= \frac{\epsilon^2}{\sigma^2 + \epsilon^2} \left(1+ \frac{\sigma^2}{\epsilon^2}\right)^2p'(x)^2 - p'(x)^2 = \frac{\sigma^2}{\epsilon^2} p'(x)^2. 
        \end{align*}
        Since $|p'(x)| \leq \epsilon$ for any $x \in \reals$, this further implies that $\expect{\left(g(x)- p'(x)\right)^2} \leq \sigma^2$. 
        Thus, the first two conditions in Lemma~\ref{lem:SGD_1d} are satisfied. Finally, we will prove by induction that the iterates $\{x_t\}_{t=1}^T$ alternate between the two points $\frac{\epsilon}{L}$ and $\frac{\epsilon}{L} - \eta \left(\epsilon + 
 \frac{\sigma^2}{\epsilon}\right)$ and the gradient norm at both points is $\epsilon$. 
 This is clearly true for $t=1$. Now suppose this holds for $t = s$. We consider the following scenarios: 
\begin{itemize}
    \item Assume that $x_s = \frac{\epsilon}{L}$, then $p'(x_s) = \epsilon$ and by the construction in \eqref{eq:stochastic_gradient_SGD} we have either $g_s = 0$ or $g_s = (1+\frac{\sigma^2}{\epsilon^2})\epsilon = \epsilon + \frac{\sigma^2}{\epsilon}$. In the former case, we have $x_{s+1} = x_s = \frac{\epsilon}{L}$, while in the latter case we have $x_{s+1} = x_s - \eta \left(\epsilon + \frac{\sigma^2}{\epsilon}\right) = \frac{\epsilon}{L} - \eta \left(\epsilon + \frac{\sigma^2}{\epsilon}\right)$. Hence, the statement holds for $t = s+1$. 
    \item Otherwise,  assume that $x_s = \frac{\epsilon}{L} - \eta \left(\epsilon + 
 \frac{\sigma^2}{\epsilon}\right)$. Since $\epsilon \leq \sigma \sqrt{\frac{L\eta}{2}}$, this implies that $ \sigma^2 \geq \frac{2\epsilon^2}{L\eta}$ and thus $\frac{\epsilon}{L} - \eta \left(\epsilon + 
 \frac{\sigma^2}{\epsilon}\right) \leq \frac{\epsilon}{L} - \frac{\eta \sigma^2}{\epsilon} \leq - \frac{\epsilon}{L}$. According to \eqref{eq:def_p_1}, we have $p'(x_s) =  -\epsilon$ and thus $g_s = 0$ or $g_s = -\epsilon - \frac{\sigma^2}{\epsilon}$. Similarly, we can show that the statement continues to hold in both cases. 
\end{itemize}
        \item \textbf{Construction II:} 
        Set $\epsilon = \frac{1}{2}\sqrt{\frac{\Delta}{2\eta T + \frac{1}{2L}}}$ and we initialize SGD with $x_1 = 0$. Similar to the proof of Theorem~\ref{thm:lower_bound_l2}, we will construct our function based on $\phi_{a,b,\epsilon}(x)$ defined in~\eqref{eq:def_phi}. Specifically, let $N  = 2T \cdot \frac{4\epsilon^2}{\sigma^2 + 4\epsilon^2} = \frac{\Delta - {2\epsilon^2}/{L}}{\eta(4\epsilon^2 + \sigma^2)}$ and define the $N$ points as 
         and $q_{t} = (t-1)\eta \left( 2\epsilon + \frac{\sigma^2}{2\epsilon} \right)$ for $t \in [N]$. Then consider the function 
        \begin{equation*}
            p(x) = 
            \begin{cases}
                - 2\epsilon x, &  x \in (-\infty,0]; \\
                L\phi_{q_t, q_{t+1}, 2\epsilon/L} (x) + p_t, & x \in (q_t,q_{t+1}]\; (1 \leq t \leq N-1); \\
                \frac{L}{2}(x-q_N)^2 - 2\epsilon (x - q_N) + p_N, & x\in (q_N, +\infty),
            \end{cases}
        \end{equation*} 
        where the values $\{p_t\}_{t=1}^N$ are determined to ensure that the function $p$ is continuous. 
        Specifically, this requires $p_1 = 0$ and $p_{t+1} = p_t + L \phi_{q_t, q_{t+1}, 2\epsilon/L} (q_{t+1}) = p_t + \frac{L}{4}(q_{t+1}-q_t)^2 - 2\epsilon(q_{t+1}-q_t)$, which leads to 
        \begin{equation*}
            p_{t+1} = t \Bigl( \frac{L\eta^2}{4}\left(2\epsilon + \frac{\sigma^2}{2\epsilon}\right)^2 - \eta(4\epsilon^2 + \sigma^2) \Bigr) \geq -\eta t (4\epsilon^2 +\sigma^2). 
        \end{equation*}
        Moreover, we set the stochastic gradient oracle as 
        \begin{equation}\label{eq:stochastic_gradient_SGD_2}
           \Pr(g(x) = 0) = \frac{\sigma^2}{\sigma^2 + 4\epsilon^2} \quad \text{and} \quad  \Pr\left(g(x) = \left(1+ \frac{\sigma^2}{4\epsilon^2}\right)p'(x)\right) = \frac{4\epsilon^2}{\sigma^2 + 4\epsilon^2}. 
        \end{equation}
        
        Again, it is straightforward to verify that $p'$ is $L$-Lipschitz, and due to the definition of $\phi$ in \eqref{eq:def_phi}, it holds that $p'(q_t) = -2\epsilon$ for all $t \in [N]$. Now we will show that $p(x_1) - \inf p \leq \Delta$. To see this, note that similar to the arguments in Lemma~\ref{lem:1d_interpolation}, one can show that 
        \begin{equation*}
            \inf p = \min_{t \in [N]} p_t - \frac{2}{L}\epsilon^2 %
            \geq  - \eta (N-1) (4\epsilon^2 + \sigma^2) - \frac{2}{L}\epsilon^2 \geq - \Delta. 
        \end{equation*}
        As a result, we obtain $p(x_1) - \inf p \leq \Delta$. 

        Finally, we will show that $\mathbb{E}\left[\min_{1 \leq t \leq T+1}  |p'({x}_t)| \right] \geq \epsilon$. Our strategy is similar to the proof of Lemma~\ref{lem:adagrad_complexity_lb}. Let $M_t$ denote the number of times the stochastic gradient is non-zero by time $t$ and set $M_0 = 0$. Then from the definition of the stochastic gradient oracle in \eqref{eq:stochastic_gradient_SGD}, we have $\E[M_T] = \frac{4\epsilon^2}{\sigma^2+4\epsilon^2}T$. By Markov's inequality, we have $\Pr(M_T > 2\E[M_T]) \leq \frac{1}{2}$. This implies that, with probability at least $\frac{1}{2}$, we have $M_T \leq 2T\frac{4\epsilon^2}{\sigma^2+4\epsilon^2} =N$. Conditioned on the event that $M_T \leq N$, we can use induction to prove that $x_t = M_{t-1}\eta\left( 2\epsilon + \frac{\sigma^2}{2\epsilon} \right)$ and $p'(x_t) = -2\epsilon$ for all $t \in [T]$. This is true for $t = 1$ and suppose that this holds for $t = s$. By the definition in \eqref{eq:stochastic_gradient_SGD_2}, we have either $g_s = 0$ or $g_s = -2\epsilon - \frac{\sigma^2}{2\epsilon}$. In the former case, $M_s = M_{s-1}$ and $x_{s+1} = x_s = M_{s}\eta\left( 2\epsilon + \frac{\sigma^2}{2\epsilon} \right)$. In the latter case, $M_s = M_{s-1}+1$ and $x_{s+1} = x_s - \eta g_s = (M_{s-1}+1)\eta\left( 2\epsilon + \frac{\sigma^2}{2\epsilon} \right) = M_s\eta\left( 2\epsilon + \frac{\sigma^2}{2\epsilon} \right)$. Moreover, Since $M_s \leq N$, we also have $p'(x_{s+1}) = -2\epsilon$. Hence, in both cases, the statement continues to hold for $t = s+1$.  
        Using the law of total probability, we can lower bound  
        \begin{equation*}
            \E \left[ \min_{1\leq t \leq T} |p'(x_t)| \right] \geq \frac{1}{2} \E \left[ \min_{1\leq t \leq T} |p'(x_t)|  \given M_T \leq N\right] = \frac{1}{2} \cdot 2\epsilon = \epsilon. 
        \end{equation*}
        This completes the proof.  
     \end{enumerate}
     Since both constructions provide a valid lower bound, we can take the maximum of the two as the final lower bound. 
     This leads to Lemma~\ref{lem:SGD_1d}. 
\end{proof}

Now we are ready to prove Theorem~\ref{thm:lower_bound_SGD}. 
Denote by $p_{\Delta, L, \sigma, \eta, T}(\cdot)$ the function in Lemma~\ref{lem:SGD_1d} that achieves the lower bound. Consider the function 
\begin{equation*}
    f(\vx) = \sum_{i=1}^d p_{{\Delta/d}, L_i, \sigma_i, \eta, T}(x^{(i)}), 
\end{equation*}
where $x^{(i)}$ denotes the $i$-th coordinate of the vector $\vx$.
If $\eta \geq \frac{2}{\|\vL\|_{\infty}}$, then it follows from the first lower bound in Lemma~\ref{lem:SGD_1d} that
\begin{equation*}
    \expect{\min_{1\leq t \leq T} \|\nabla f(\vx_t)\|_1} \geq \sqrt{ \frac{2\|\vL\|_{\infty}\Delta}{d}}. 
\end{equation*}
If $\eta < \frac{2}{\|\vL\|_{\infty}} \leq \frac{1}{L_i}$ for all $i \in [d]$, it follows from the second lower bound in Lemma~\ref{lem:SGD_1d} that :
\begin{align}
    \expect{\min_{1\leq t \leq T} \|\nabla f(\vx_t)\|_1} &\geq \sum_{i=1}^d \expect{\min_{1\leq t \leq T} |p'_{{\Delta}/{d}, L_i, \sigma_i, \eta, T}(x_t^{(i)})|} \nonumber\\
    &\geq \sum_{i=1}^d \max\left\{ \frac{1}{2}\sqrt{\frac{\Delta/d}{2\eta T + \frac{1}{2L_i}}}, \min \left\{\sigma_i \sqrt{\frac{L_i \eta}{2}},\sqrt{2L_i \frac{\Delta}{d}} \right\} \right\} \nonumber\\
    & \geq \sum_{i=1}^d  \frac{1}{4}\sqrt{\frac{\Delta/d}{2\eta T + \frac{1}{2L_i}}} + \sum_{i=1}^d \frac{1}{2}\min \left\{\sigma_i \sqrt{\frac{L_i \eta}{2}},\sqrt{2L_i \frac{\Delta}{d}} \right\} \\
    & \geq \frac{1}{4}\sqrt{\frac{d\Delta}{2\eta T + \frac{1}{2L_{\min}}}} + \sum_{i=1}^d \frac{1}{2}\min \left\{\sigma_i \sqrt{\frac{L_i \eta}{2}},\sqrt{2L_i \frac{\Delta}{d}} \right\}. \label{eq:lower_bound_SGD}
\end{align}
Now we would like to establish a lower bound that is independent of the step size $\eta$. Let $L_{\min} = \min_{i \in [d]} L_i$. We consider the following cases. 
\begin{enumerate}[(i)]
    \item If $2\eta T \leq \frac{1}{2L_{\min}}$, then the lower bound in \eqref{eq:lower_bound_SGD} is at least $\frac{1}{4}\sqrt{\frac{d\Delta}{2\eta T + \frac{1}{2L_{\min}}}}\geq \frac{1}{4} \sqrt{{L_{\min} d\Delta}}$. 
    \item If $2 \eta T \geq \frac{1}{2L_{\min}}$ but $\sigma_i \sqrt{\frac{L_i \eta}{2}} \geq \sqrt{2L_i \frac{\Delta}{d}}$ for some $i \in [d]$, then the lower bound in \eqref{eq:lower_bound_SGD} is at least $ \frac{1}{2}\sqrt{ \frac{2L_i\Delta}{d}} \geq \frac{1}{2}\sqrt{ \frac{2L_{\min}\Delta}{d}}$. 
    \item Finally, If $2 \eta T \geq \frac{1}{2L_{\min}}$ and $\sigma_i \sqrt{\frac{L_i \eta}{2}} < \sqrt{2L_i \frac{\Delta}{d}}$ for all $i \in [d]$, then the lower bound in \eqref{eq:lower_bound_SGD} becomes 
    \begin{equation*}
        \frac{1}{4}\sqrt{\frac{d\Delta}{2\eta T + \frac{1}{2L_{\min}}}} + \sum_{i=1}^d \frac{1}{2}\sigma_i \sqrt{\frac{L_i \eta}{2}} 
        \geq \frac{1}{8}\sqrt{\frac{d\Delta}{\eta T}} + \frac{1}{2\sqrt{2}}\sum_{i=1}^d \sigma_i \sqrt{{L_i }} \sqrt{\eta}. 
    \end{equation*}
    Since $\eta < \frac{2}{\|\vL\|_{\infty}}$, we can further lower bound the above inequality by $\frac{1}{8}\sqrt{\frac{d\Delta}{\eta T}} \geq \frac{1}{8}\sqrt{\frac{d\|\vL\|_{\infty}\Delta}{2T}}$. Moreover, by using the elementary inequality $a + b \geq 2\sqrt{ab}$ for any $a,b \geq 0$, we also obtain that 
    \begin{equation*}
        \frac{1}{8}\sqrt{\frac{d\Delta}{\eta T}} + \frac{1}{2\sqrt{2}}\sum_{i=1}^d \sigma_i \sqrt{{L_i }} \sqrt{\eta} 
        \geq \frac{d^{{1}/{4}}\Delta_f^{1/4} (\sum_{i=1}^d \sigma_i \sqrt{L_i})^{1/2}}{4\cdot 2^{1/4} T^{1/4}}.
    \end{equation*}  
    Hence, in this case we have 
    \begin{align*}
        \expect{\min_{1\leq t \leq T} \|\nabla f(\vx_t)\|_1} &\geq \max\left\{ \frac{1}{8}\sqrt{\frac{d\|\vL\|_{\infty}\Delta}{2T}}, \frac{d^{{1}/{4}}\Delta_f^{1/4} (\sum_{i=1}^d \sigma_i \sqrt{L_i})^{1/2}}{4\cdot 2^{1/4} T^{1/4}}\right\} \\
        & \geq \frac{1}{16}\sqrt{\frac{d\|\vL\|_{\infty}\Delta}{2T}} + \frac{d^{{1}/{4}}\Delta_f^{1/4} (\sum_{i=1}^d \sigma_i \sqrt{L_i})^{1/2}}{8\cdot 2^{1/4} T^{1/4}}
    \end{align*}
\end{enumerate}
By taking the minimum of all three cases, we conclude that 
\begin{equation*}
    \expect{\min_{1\leq t \leq T} \|\nabla f(\vx_t)\|_1} \geq \min\left\{\frac{1}{16}\sqrt{\frac{d\|\vL\|_{\infty}\Delta}{2T}} + \frac{d^{{1}/{4}}\Delta_f^{1/4} (\sum_{i=1}^d \sigma_i \sqrt{L_i})^{1/2}}{8\cdot 2^{1/4} T^{1/4}}, \frac{1}{4} \sqrt{\frac{L_{\min} \Delta}{d}} \right\}.
\end{equation*}
Note that the second term in our lower bound is a constant independent of $T$. Thus, when $T$ is sufficiently large, we obtain the result in Theorem~\ref{thm:lower_bound_SGD}.

\end{document}